\documentclass[11pt]{article}

\usepackage[margin=1in]{geometry}
\usepackage{amsmath,amssymb,amsthm,mathtools}
\usepackage{paralist}
\usepackage[misc]{ifsym}
\usepackage{epsfig}
\usepackage{epstopdf}
\usepackage[colorlinks=true]{hyperref}
\usepackage{mathrsfs}
\usepackage{dsfont}
\usepackage{enumitem}
\usepackage{tikz}

\hypersetup{
  urlcolor=blue,
  citecolor=red,
  linkcolor=blue
}

\allowdisplaybreaks

\newtheorem{theorem}{Theorem}[section]

\newtheorem{lemma}[theorem]{Lemma}
\newtheorem{proposition}[theorem]{Proposition}

\theoremstyle{definition}
\newtheorem{definition}[theorem]{Definition}
\newtheorem{remark}[theorem]{Remark}
\newtheorem{example}[theorem]{Example}

\makeatletter
\def\namedlabel#1#2{%
  \begingroup
  #2%
  \def\@currentlabel{#2}%
  \phantomsection\label{#1}%
  \endgroup
}
\makeatother

\renewcommand{\H}{\mathcal{H}}
\newcommand{\R}{\mathbb{R}}

\newcommand{\N}{\mathbb{N}}

\renewcommand{\epsilon}{\varepsilon}

\def\B{\mathbb{B}}

\newcommand{\cl}{\operatorname{cl}}
\newcommand{\inte}{\operatorname{int}}
\newcommand{\bd}{\operatorname{bd}}

\DeclareMathOperator{\proj}{proj}

\newcommand{\dnu}{d\nu}

\title{Integral Formulations and a Br\'ezis-Ekeland-Nayroles-Type Principle 
for Prox-Regular Sweeping Processes}

\author{
Juan Guillermo Garrido\footnote{Universidad de Chile, Santiago, Chile.
Email: \texttt{jgarrido@uchile.cl}}
\and
Emilio Vilches\footnote{Universidad de O'Higgins, Rancagua, Chile.
Email: \texttt{emilio.vilches@uoh.cl}}
}

\date{}

\begin{document}

\maketitle

\begin{abstract}
We study sweeping processes in a Hilbert space driven by
time-dependent uniformly prox-regular sets, allowing
bounded-variation discontinuities of the moving constraint.
We introduce a new integral formulation for
bounded-variation trajectories, expressed as a global
variational inequality tested against continuous admissible
trajectories, and compare it with the standard
differential-measure formulation involving the proximal
normal cone. In the prox-regular, generally nonconvex,
setting, this inequality includes a quadratic correction
term arising from the hypomonotonicity of proximal normal
cones.

Under mild assumptions on the moving set, including lower
semicontinuity in time, uniform prox-regularity, and a
selection-extension property ensuring a rich class of test
trajectories, we prove that the integral and
differential-measure formulations are equivalent. This
provides a unified bounded-variation notion of solution for
prox-regular sweeping processes.

We also establish a Br\'ezis--Ekeland--Nayroles-type
variational characterization through a prox-regular
residual: it is nonpositive along admissible trajectories,
and solutions are precisely those for which it vanishes.
As a consequence, we obtain a stability result for uniform
limits of admissible trajectories with vanishing residuals.
\end{abstract}

\medskip
\noindent\textbf{2020 Mathematics Subject Classification.} 34A60, 34G25, 47J22, 26A45, 49J53.

\medskip
\noindent\textbf{Keywords.}
Sweeping process, prox-regular sets, integral solutions,
differential measures, Br\'ezis-Ekeland-Nayroles-type principle.


\section{Introduction}

Let $\H$ be a Hilbert space. The sweeping process is a first-order dynamical system involving the normal cone to a moving closed set. It was introduced by J.-J.~Moreau in the early 1970s as a model for elastoplasticity in contact mechanics (see \cite{MR637727,MR637728,MR508661}).  In the classical (convex) setting, the problem is to find an absolutely continuous trajectory $x\colon [0,T]\to \H$ such that
\begin{equation}\label{SSPP}
\left\{
    \begin{aligned}
    \dot{x}(t)&\in -N(C(t);x(t)) \qquad \textrm{for a.e. } t\in [0,T],\\
    x(0)&=x_0\in C(0).
    \end{aligned}
    \right.
\end{equation}
where $N(C(t);x)$ denotes the (convex) normal cone to $C(t)$ at $x$. 

Because of the presence of the normal cone, the inclusion in \eqref{SSPP} is meaningful only along points $x(t)\in C(t)$. In particular, any solution must satisfy the state constraint $x(t)\in C(t)$ at least for a.e. $t\in [0,T]$. Consequently, existence of solutions in this classical sense requires suitable regularity of the set-valued map $t\rightrightarrows C(t)$. For instance, Moreau's original existence theorem for \eqref{SSPP} (see \cite{MR637727}) assumes Lipschitz continuity in Hausdorff distance:
\begin{align*}
\operatorname{Haus}(C(t),C(s))\leq \kappa \vert t-s\vert  \quad t,s\in [0,T],
\end{align*}
where $\kappa \geq 0$ and $\operatorname{Haus}(A,B)$ denotes the Hausdorff distance between $A$ and $B$.

Motivated by applications in contact mechanics, where collisions, jumps and abrupt changes of the constraint set may occur (see, e.g., \cite{MR3467591,MR1710456,MR2857428}), substantial effort has been devoted to extending the notion of solution to discontinuous moving sets (typically of bounded variation). In this paper, we investigate two notions of solution for discontinuous sweeping processes and prove that they are equivalent under very general assumptions. More precisely, we consider moving sets that are uniformly
prox-regular (hence possibly nonconvex) and merely lower semicontinuous
in time, and we show that the integral formulation
(Definition~\ref{def1}) is equivalent to the standard
differential-measure formulation (Definition~\ref{def2}).  For the convex and bounded case, we refer to \cite[Section~9]{MR4547969}. To the best of our knowledge, an integral variational formulation of this type has not been established in the literature for sweeping processes with uniformly prox-regular (nonconvex) moving sets.

Our analysis also connects to recent work of Krej\v{c}\'{\i}, Monteiro, and
Recupero \cite{MR4484114,MR4643520}, who develop a Kurzweil-integral variational formulation for sweeping processes with uniformly prox-regular constraints in the space
of regulated functions. In the setting of translated constraints
$C(t)=u(t)-Z(w(t))$, they prove well-posedness and identify Moreau's
catching-up scheme as an exact solver for step inputs, leading to
existence by uniform approximation of regulated data \cite{MR4484114}.
More recently, they study vanishing-viscosity regularizations for the
translation-only case $C(t)=u(t)-Z$ and prove that the viscous solutions
converge pointwise, as $\varepsilon\downarrow 0$, to the left-continuous
representative of the Kurzweil solution (with uniform convergence when
the input is continuous), relying on an interior cone-type condition to
obtain uniform variation bounds \cite{MR4643520}. 

The present work is complementary: we work in a Radon-measure BV
framework for general time-dependent uniformly prox-regular sets,
establish the equivalence between the integral and differential-measure
formulations, and develop a variational residual whose vanishing
characterizes solutions. This residual viewpoint yields a stability
principle well suited to approximation (including catching-up and
inexact schemes) and clarifies the link between Kurzweil/Young
formulations and Radon-measure methods.

A central motivation for introducing such a variational residual is the classical
Brezis-Ekeland variational principle and Nayroles' minimum theorem,
which provide a global-in-time variational characterization for a broad
class of evolution problems generated by convex potentials or maximal
monotone operators: one associates to each trajectory a nonnegative
functional whose minimum value is $0$, and solutions are precisely the
trajectories attaining this minimum \cite{MR637215,MR637214,MR418609}.
This perspective has proved particularly effective for existence via
direct methods, as well as for stability and approximation analyses,
including time-discretization schemes \cite{MR2425653,MR2531193,MR2489147}.

In the present paper, however, we use the variational principle
primarily as a characterization and stability tool, rather than as a
direct existence mechanism under the standing assumptions. Indeed, a
direct-method argument would require an additional compactness framework
for admissible trajectories, together with lower semicontinuity of the
associated residual and a recovery construction ensuring that its
infimum is zero. Such ingredients depend on further quantitative
assumptions on the moving set and on the approximation scheme, and are
beyond the scope of the present work.

Motivated by these advantages, in Section~\ref{section5} (Theorem~\ref{thm_BEN})
we prove a Br\'ezis-Ekeland-Nayroles-type principle for sweeping processes
driven by uniformly prox-regular moving sets. In turn, in Section~\ref{section6}
this variational characterization yields a residual-based stability result for
approximations of the moving set: a uniform limit of admissible trajectories
with vanishing residual is a solution of the limit sweeping process.

Taken together, these results build a bridge between a local
characterization, formulated as a measure differential inclusion through
the proximal normal cone, and a global variational characterization,
expressed by an integral inequality tested against admissible
trajectories. This connection unifies the bounded-variation solution
concepts and provides a robust framework for stability and approximation
analyses in the prox-regular, potentially nonconvex, setting.

The paper is organized as follows. In Section~\ref{section2} we fix notation and
collect preliminary material on uniformly prox-regular sets, lower
semicontinuity for set-valued mappings, and measurability tools,
including a Lusin-type result. In Section~\ref{section3} we introduce the geometric
framework for the moving constraint $C(\cdot)$ and present classes of
admissible families, together with sufficient conditions ensuring the
selection-extension property. In Section~\ref{section4} we define integral and
differential-measure solutions and prove their equivalence
(Theorem~\ref{thm_main}). In Section~\ref{section5} we establish a
Br\'ezis-Ekeland-Nayroles-type variational characterization for
prox-regular sweeping processes via the variational residual
(Theorem~\ref{thm_BEN}). Finally, in Section~\ref{section6} we derive a residual-based
stability result for approximations of the moving set: a uniform limit of
admissible trajectories with vanishing residual is a solution of the
limit sweeping process.

	\section{Notation and Preliminary Results}\label{section2}

    Throughout the paper, $\H$ stands for a Hilbert space endowed with the inner product $\langle\cdot,\cdot\rangle$ and norm $\|\cdot\|$. The closed (resp. open) ball centered at $x$ with radius $r>0$ is denoted by $\mathbb{B}_r[x]$ (resp. $\mathbb{B}_r(x)$), and the closed unit ball is denoted by $\mathbb{B}$. For a subset $A\subset\H$, we denote the closure, interior and boundary of $A$, respectively, as $\cl A$, $\inte A$, and $\bd A$. Given two points $x,y\in \H$, we define the interval $$[x,y]_{\H} := \{tx+(1-t)y : t\in [0,1]\}.$$
Given $\rho\in ]0,+\infty]$ and $\gamma\in ]0,1[$, the  \emph{$\rho$-enlargement} and the 
	\emph{$\gamma\rho$-enlargement} of $S$ are 
	$$
	U_\rho(S) := \{x\in \H:d(x;S)<\rho\} \textrm{ and } U_\rho^\gamma(S):=U_{\gamma\rho}(S),
	$$
    where $d(x;S):=\inf_{y\in S}\|x-y\|$ denotes the distance from $x$ to $S$.
    The \emph{projection} of $x$ onto $S\subset \H$ is the (possibly empty) set  
$$\operatorname{Proj}_{S}(x):=\left\{z\in S : d_{S}(x)=\Vert x-z\Vert\right\}.$$ When the above set is a singleton, we denote its unique element by $\operatorname{proj}_{S}(x)$.

\subsection{Elements of Nonsmooth Analysis}
    
 Let $f\colon \H\to \mathbb{R}\cup\{+\infty\}$ be a \emph{lower semicontinuous} (lsc) function and let $x\in \operatorname{dom}f$. We say that $\zeta$ belongs to the \emph{proximal subdifferential}  of $f$ at $x$, denoted  $\partial_P f(x)$, if there exist $\sigma\geq 0$ and $\eta\geq 0$ such that
		\begin{equation*}
			f(y)\geq f(x)+\left\langle \zeta,y-x\right\rangle -\sigma\Vert y-x\Vert^2 \textrm{ for all } y\in \mathbb{B}_{\eta}(x).
		\end{equation*}
	Using this notion, the \emph{proximal normal cone} of a closed set $S\subset \H$ at $x\in S$ is
	$$
	N^P(S;x):=\partial_P \delta_S(x),
	$$
	where $\delta_S$ is the indicator function of $S\subset \H$ (i.e., $\delta_S(x)=0$ if $x\in S$ and $\delta_S(x)=+\infty$ otherwise). Moreover, it is well known that if $S$ is convex, then the proximal normal cone
coincides with the (convex) normal cone
\[
N^{\operatorname{conv}}(S;x):=\bigl\{\zeta\in\H:\ \langle \zeta,\,y-x\rangle \le 0
\ \text{for all } y\in S\bigr\}.
\]
We refer to \cite{MR1488695} for further details.

 \subsection{Prox-regular Sets}   
	We recall the notion of uniformly prox-regular sets. In finite dimension,
it goes back to Federer \cite{MR110078} under the name of \emph{sets of positive
reach}, and it was later developed in a variational-analytic framework by
Rockafellar, Poliquin, and Thibault \cite{MR1694378}. Uniform prox-regularity
provides a geometric setting that strictly extends convexity while covering
broad classes of nonconvex sets with sufficiently smooth geometry (for instance,
$C^{2}$-smooth embedded submanifolds and $C^{2}$-domains). For further background
and surveys, see \cite{MR2768810,MR4659163}.
	
\begin{definition}
Let $S\subset \H$ be nonempty and closed and $\rho\in ]0,+\infty]$. We say that $S$ is \emph{$\rho$-uniformly prox-regular} if, for all $x\in S$ and $\zeta\in N^P(S;x)$, 
		\begin{equation*}
			\langle \zeta,x'-x\rangle\leq \frac{\|\zeta\|}{2\rho}\|x'-x\|^2 \textrm{ for all } x'\in S.
		\end{equation*}
	\end{definition}
	Note that every closed convex set is $\rho$-uniformly prox-regular for every $\rho>0$. The following proposition provides a characterization of uniformly prox-regular sets in Hilbert spaces (see, e.g.,  \cite{MR2768810}).
	\begin{proposition}\label{prox_reg_prop}
		Let $S\subset \H$ be a closed set and $\rho\in ]0,+\infty]$. The following assertions are equivalent:
		\begin{enumerate}
			\item [(a)] $S$ is $\rho$-uniformly prox-regular.
			\item [(b)] For any positive $\gamma<1$ the mapping $\proj_S$ is well-defined on $U_\rho^\gamma(S)$ and Lipschitz continuous on $U_\rho^\gamma(S)$ with $(1-\gamma)^{-1}$ as a Lipschitz constant, i.e.,
			$$\left\|\proj_S\left(u_1\right)-\proj_S\left(u_2\right)\right\| \leq(1-\gamma)^{-1}\left\|u_1-u_2\right\| \quad$$ for all $u_1, u_2 \in U_\rho^\gamma(S)$.
		\end{enumerate}
	\end{proposition}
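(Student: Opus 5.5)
We prove the two implications separately. Throughout, fix $\gamma\in]0,1[$ and write $P:=\proj_S$.

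\emph{(a)$\Rightarrow$(b).} The plan has three steps.

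\textbf{Step 1 (existence of projections).} Let $u\in U_\rho^\gamma(S)$ and $d:=d(u;S)<\gamma\rho$, and take a minimizing sequence $x_n\in S$ with $\|u-x_n\|^2\le d^2+\epsilon_n$, $\epsilon_n\downarrow0$. We show $(x_n)$ is Cauchy. In infinite dimensions compactness is unavailable, so we use a variational-principle argument. Apply Ekeland's principle, or the Borwein--Preiss smooth variational principle, to $\|u-\cdot\|^2+\delta_S$. This produces points $y_n\in S$ near $x_n$ and vectors $\zeta_n:=u-y_n+e_n\in N^P(S;y_n)$ with $e_n\to0$. (With Borwein--Preiss, $e_n$ comes from the gradient of the quadratic perturbation, so $\zeta_n$ really is a proximal normal.) Apply the prox-regularity inequality at $y_n$ with $x'=y_m$, and again with the roles of $n,m$ swapped, then add:
\[
\langle \zeta_n-\zeta_m,\,y_m-y_n\rangle\le \frac{\|\zeta_n\|+\|\zeta_m\|}{2\rho}\|y_n-y_m\|^2 .
\]
The left side equals $\|y_n-y_m\|^2+\langle e_n-e_m,y_m-y_n\rangle$. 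Moreover $\|\zeta_n\|\le d+o(1)<\gamma\rho+o(1)$. Hence
\[
(1-\gamma-o(1))\|y_n-y_m\|^2\le o(1)\,\|y_n-y_m\|,
\]
which makes $(y_n)$ Cauchy. Its limit is a nearest point of $u$ in $S$, because $S$ is closed.

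\textbf{Step 2 (uniqueness).} Let $p_1,p_2$ be two nearest points of $u$. Then $u-p_i\in N^P(S;p_i)$, and the same hypomonotonicity computation gives
\[
\|p_1-p_2\|^2\le \gamma\|p_1-p_2\|^2 ,
\]
so $p_1=p_2$.

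\textbf{Step 3 (Lipschitz estimate).} For $u_1,u_2\in U_\rho^\gamma(S)$, set $p_i=P(u_i)$ and $\zeta_i=u_i-p_i\in N^P(S;p_i)$, so $\|\zeta_i\|<\gamma\rho$. Adding the two prox-regularity inequalities gives
\[
\langle (u_1-p_1)-(u_2-p_2),\,p_2-p_1\rangle\le\gamma\|p_1-p_2\|^2 ,
\]
that is,
\[
\|p_1-p_2\|^2-\langle u_1-u_2,p_1-p_2\rangle\le\gamma\|p_1-p_2\|^2 .
\]
Cauchy--Schwarz then yields $\|p_1-p_2\|\le(1-\gamma)^{-1}\|u_1-u_2\|$.

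\emph{(b)$\Rightarrow$(a).} Let $x\in S$ and $\zeta\in N^P(S;x)$ with $\zeta\neq0$, normalized so that $\|\zeta\|=1$. A proximal normal is realized by projection: $x\in\operatorname{Proj}_S(x+t\zeta)$ for all small $t>0$.

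Define $T:=\sup\{t\ge0:\ P(x+t\zeta)=x\}$. We claim $T\ge\rho$. Suppose instead $T<\rho$, and choose $\gamma$ with $T<\gamma\rho$. The map $t\mapsto P(x+t\zeta)$ is continuous on $[0,\gamma\rho[$ by the Lipschitz property in (b), so $P(x+T\zeta)=x$.

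To go beyond $T$ we use differentiability of the distance function. By (b), $d_S^2$ is $C^{1}$ on $U_\rho^\gamma(S)$ with
\[
\nabla d_S^2=2(I-P)
\]
(a standard consequence of $P$ being single-valued and continuous). Along the ray $r(t)=x+t\zeta$ we have
\[
\tfrac{d}{dt}d_S^2(r(t))=2\langle r(t)-P(r(t)),\zeta\rangle .
\]
A monotonicity argument comparing $d_S(r(t))\le t$ with the value $d_S(r(T))=T$ then shows that $d_S(r(t))=t$ on a neighborhood beyond $T$. Hence $P(r(t))=x$ there, contradicting the maximality of $T$.

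Therefore $x=P(x+t\zeta)$ for every $t<\rho$, which means
\[
\|x+t\zeta-x'\|\ge t\quad\text{for all } x'\in S .
\]
Expanding the square gives
\[
\langle\zeta,x'-x\rangle\le\frac{1}{2t}\|x'-x\|^2 ,
\]
and letting $t\uparrow\rho$ yields the inequality in (a).

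The main obstacle is the continuation step beyond $T$ in (b)$\Rightarrow$(a). We expect to handle it with the $C^{1}$ regularity of $d_S^2$ and a uniqueness argument for the ODE $\dot r=\zeta$ combined with the relation $r-P(r)\parallel\zeta$.
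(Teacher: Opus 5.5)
The paper gives no proof of this proposition; it quotes it from \cite{MR2768810}. Your (a)$\Rightarrow$(b) is the standard argument and is correct. In Step~1 you do need the Borwein--Preiss version, or the density of points admitting a nearest point, because plain Ekeland yields only approximate Fr\'echet normals, as you note. Steps~2--3 are the usual hypomonotonicity computations. Your overall plan for (b)$\Rightarrow$(a) is also right. However, the continuation past $T$, which you flag yourself, is a genuine gap, and the argument you sketch cannot close it.

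\emph{Why the sketch fails.} Along the ray, everything you know about $\varphi(t):=d_S(x+t\zeta)$ is an upper bound:
\begin{itemize}
\item $\varphi(t)\le t$ and $\varphi(T)=T$;
\item $\varphi'(t)=\langle\nabla d_S(x+t\zeta),\zeta\rangle\le 1$;
\item monotonicity of $P=\nabla(\tfrac12\|\cdot\|^2-\tfrac12 d_S^2)$ gives only $\frac{d}{dt}d_S^2(x+t\zeta)\le 2t$.
\end{itemize}
All of these are satisfied by the function equal to $t$ on $[0,T]$ and to $t-(t-T)^2<t$ for $t>T$. So no comparison along the ray can force $\varphi(t)=t$; the lower bound $\varphi'\ge 1$ you would need is equivalent to the conclusion. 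Likewise, uniqueness for $\dot r=\zeta$ ``combined with $r-P(r)\parallel\zeta$'' is circular, since that parallelism beyond $T$ is exactly what must be shown.

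\emph{A fix.} You have to use $P$ off the ray. On $\Omega:=U_{\gamma\rho}(S)\setminus S$ the field $G:=\nabla d_S=(I-P)/d_S$ has unit norm and, by (b), is locally Lipschitz. This Lipschitz property, not mere $C^1$ regularity of $d_S^2$, is what Cauchy--Lipschitz requires in infinite dimension. Let $\dot u=G(u)$ on $[T,T+\epsilon]$ with $u(T)=x+T\zeta$.
\begin{itemize}
\item Then $\frac{d}{dt}d_S(u(t))=\|G(u(t))\|^2=1$, so $d_S(u(t))=t$.
\item Also $\|u(t)-x\|\le T+\int_T^t\|\dot u\|=t$.
\item Since $x\in S$, these give $x\in\operatorname{Proj}_S(u(t))$, hence $P(u(t))=x$.
\item Then $\dot u=(u-x)/t$, so $u(t)=x+t\zeta$, contradicting the maximality of $T$.
\end{itemize}

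A purely metric alternative also works. Take $t_n\downarrow T$ with $p_n:=P(x+t_n\zeta)\neq x$, and put $w_n:=p_n+T\,\frac{x+t_n\zeta-p_n}{\|x+t_n\zeta-p_n\|}$. For large $n$ one has $P(w_n)=p_n$. The inequality $\|x+T\zeta-p_n\|\ge T$, together with $p_n\to x$, gives $\|w_n-(x+T\zeta)\|=o(\|p_n-x\|)$. This is incompatible with the $(1-\gamma)^{-1}$-Lipschitz bound at $x+T\zeta$.
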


	We establish the following new property of uniformly prox-regular sets. A related result can be found in \cite[Lemma 5 \& Lemma 6]{MR2406391}.
	\begin{lemma}\label{lemma-prox-inter-ball}
		Let $S$ be a closed and $\rho$-uniformly prox-regular set, and let $a\in \H$ and $r<\rho$ be such that $S\cap \mathbb{B}_r(a)\neq \emptyset$. Then $\cl{(S\cap \mathbb{B}_r(a))} = S\cap \mathbb{B}_r[a]$, and the set $S\cap \mathbb{B}_r[a]$ is $\rho$-uniformly prox-regular.
	\end{lemma}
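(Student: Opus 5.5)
The plan is to prove the two assertions separately: first the closure identity, by a segment-and-projection argument, and then prox-regularity, by decomposing proximal normals of the intersection.

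\emph{Closure identity.} The inclusion $\cl(S\cap\mathbb{B}_r(a))\subset S\cap\mathbb{B}_r[a]$ is immediate, since the right-hand side is closed. For the converse, fix $x\in S$ with $\|x-a\|=r$ and some $y\in S\cap\mathbb{B}_r(a)$. Set $z_t:=x+t(y-x)$ and note that $\|x-y\|<2r<2\rho$.

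For $t>0$ small, $z_t\in U^\gamma_\rho(S)$, so by Proposition~\ref{prox_reg_prop} the point $p_t:=\proj_S(z_t)$ exists and satisfies $\|p_t-x\|\le (1-\gamma)^{-1}t\|y-x\|$. Since $z_t-p_t\in N^P(S;p_t)$, I apply the prox-regularity inequality with $x'=x$ and with $x'=y$, and combine the two with weights $1-t$ and $t$. This gives
\[
d_t:=\|z_t-p_t\|\le \tfrac{1}{2\rho}\bigl[(1-t)\|x-p_t\|^2+t\|y-p_t\|^2\bigr]=\tfrac{t}{2\rho}\|x-y\|^2+o(t).
\]
On the other hand, a first-order expansion yields $\|z_t-a\|=r+\tfrac{t}{2r}\bigl(\|y-a\|^2-r^2-\|x-y\|^2\bigr)+o(t)$. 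Hence
\[
\|p_t-a\|\le \|z_t-a\|+d_t = r+t\Bigl[\tfrac{\|y-a\|^2-r^2}{2r}-\|x-y\|^2\Bigl(\tfrac1{2r}-\tfrac1{2\rho}\Bigr)\Bigr]+o(t).
\]
The bracket is strictly negative because $\|y-a\|<r$ and $r<\rho$. So for small $t$ we get $p_t\in S\cap\mathbb{B}_r(a)$, and $p_t\to x$ as $t\to0$.

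\emph{Prox-regularity.} Let $K:=S\cap\mathbb{B}_r[a]$, $x\in K$ and $\zeta\in N^P(K;x)$. If $\|x-a\|<r$, then $K$ coincides with $S$ near $x$, so $\zeta\in N^P(S;x)$. The global inequality then follows from prox-regularity of $S$, because $K\subset S$.

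If $\|x-a\|=r$, the aim is the decomposition $\zeta=\xi+\lambda(x-a)$ with $\xi\in N^P(S;x)$ and $\lambda\ge0$. Granting this, for $x'\in K$ we have $\langle x-a,x'-x\rangle\le-\tfrac12\|x'-x\|^2$, since $\|x'-a\|\le r=\|x-a\|$. Therefore
\[
\langle\zeta,x'-x\rangle\le\Bigl(\tfrac{\|\xi\|}{2\rho}-\tfrac{\lambda}{2}\Bigr)\|x'-x\|^2
=\tfrac{\|\xi\|-\rho\lambda}{2\rho}\|x'-x\|^2\le\tfrac{\|\zeta\|}{2\rho}\|x'-x\|^2.
\]
The last step uses $\|\xi\|-\rho\lambda\le\|\xi\|-r\lambda=\|\xi\|-\lambda\|x-a\|\le\|\xi+\lambda(x-a)\|$.

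\emph{Main obstacle.} The hard step is the proximal sum rule $N^P(K;x)\subset N^P(S;x)+\mathbb{R}_+(x-a)$ in a general Hilbert space, where no compactness is available. I would try to derive it from a local metric estimate $d(z;K)\le c\,d(z;S)$ for $z$ near $x$, which should follow from the segment construction above: it pushes projections onto $S$ strictly into the open ball at a linear rate. With that estimate, $\zeta$ is a proximal normal to $K$, so $x$ minimizes $-\langle\zeta,\cdot\rangle+\sigma\|\cdot-x\|^2$ over $K$ locally. Using the estimate as an exact penalty, $x$ then minimizes this function plus $Mc\,d_S$ over $\mathbb{B}_r[a]$ locally, for a large constant $M$.

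From there, two routes are available. One is to use the proximal regularity of $d_S$ near $S$: it is $C^{1,1}$ off $S$ in the tube, and its proximal subdifferential at $x$ consists of elements of $N^P(S;x)\cap\mathbb{B}$. The other is a direct Lagrange-multiplier argument for the smooth convex constraint $\|\cdot-a\|^2\le r^2$. Either should deliver the required $\xi$ and $\lambda$. If this route stalls, the fallback is to verify criterion (b) of Proposition~\ref{prox_reg_prop} for $K$ directly, using the projection estimates from the first part.
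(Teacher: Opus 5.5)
Your proof of the closure identity is correct, and it takes a different route from the paper's. The paper argues by contradiction along $[x,a]_{\H}$. If some ball $\mathbb{B}_{t\|a-x\|}(ta+(1-t)x)\subset\mathbb{B}_r(a)$ missed $S$, then $a-x\in N^P(S;x)$. The prox-regularity inequality would then give $\|x'-a\|^2\ge r^2+(1-r/\rho)\|x'-x\|^2$ for all $x'\in S$, contradicting $S\cap\mathbb{B}_r(a)\neq\emptyset$. That argument is shorter. Your projected segment toward $y\in S\cap\mathbb{B}_r(a)$ costs more but gives a linear rate.

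\textbf{The gap.} The genuine gap is in the second assertion. The paper does not prove it; it cites Corollary~16.17 of \cite{MR4659163}. Your reduction is sound: interior points are handled, and the final estimate follows once you have $\zeta=\xi+\lambda(x-a)$ with $\xi\in N^P(S;x)$ and $\lambda\ge 0$. But the inclusion $N^P(K;x)\subset N^P(S;x)+\R_+(x-a)$ at points with $\|x-a\|=r$ is the whole content of the claim, and you only say that two routes ``should deliver'' it.

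Two steps of your sketch do not work as written.
\begin{enumerate}
\item The estimate $d(z;K)\le c\,d(z;S)$ cannot hold for all $z$ near $x$: points of $S\setminus\mathbb{B}_r[a]$ close to $x$ have $d(z;S)=0<d(z;K)$. You only need it for $z\in\mathbb{B}_r[a]$, which is all your penalization over $\mathbb{B}_r[a]$ uses. Even so, it requires your segment estimate to hold uniformly in the base point $p=\proj_S(z)$, which now ranges near $x$ with $r\le\|p-a\|\le r+d(z;S)$. This is feasible, since your remainders are $O(t^2)$ with constants controlled by $(1-\gamma)^{-1}$, but it has to be carried out.
\item More seriously, after penalization you only know that $x$ locally minimizes $\phi+Md_S$ over $\mathbb{B}_r[a]$, where $\phi:=-\langle\zeta,\cdot-x\rangle+\sigma\|\cdot-x\|^2$. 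The identity $\partial_P d_S(x)=N^P(S;x)\cap\mathbb{B}$ does not by itself split $\partial_P(Md_S+\delta_{\mathbb{B}_r[a]})(x)$. Proximal sum rules are only fuzzy in general, and the inclusion you need is exactly the hard direction.
\end{enumerate}

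\textbf{How to close it.} Apply the fuzzy sum rule to the Lipschitz function $\phi+Md_S$ and to $\delta_{\mathbb{B}_r[a]}$. This gives $\zeta=2\sigma(x_1^k-x)+M\eta_k+\lambda_k(x_2^k-a)+e_k$ with $x_i^k\to x$, $e_k\to 0$, $\lambda_k\ge 0$, and $\eta_k\in\partial_P d_S(x_1^k)$ with $\|\eta_k\|\le 1$.
\begin{enumerate}
\item Since $\lambda_k r\le\|\zeta\|+M+o(1)$, along a subsequence $\lambda_k\to\lambda$ and $M\eta_k\to\zeta-\lambda(x-a)$ strongly.
\item Each $\eta_k$ is a proximal normal of norm at most $1$ to $S$ at a point $q_k\in S$ with $q_k\to x$ (namely $x_1^k$ or $\proj_S(x_1^k)$).
\item The uniform $\rho$-prox-regularity inequality therefore passes to the strong limit and yields $\zeta-\lambda(x-a)\in N^P(S;x)$.
\end{enumerate}
Clarke's multiplier rule combined with $N^C(S;x)=N^P(S;x)$ for prox-regular $S$ also works, or you may simply cite the result as the paper does. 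Without one of these, the prox-regularity of $S\cap\mathbb{B}_r[a]$ remains unproved.
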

	\begin{proof}
    Take $x\in S\cap \mathbb{B}_r[a]$. The nontrivial case is when $x\in \bd S$ and $\|x-a\| = r$. Define $u_t := ta+(1-t)x$ for every $t\in ]0,1]$. Then $\mathbb{B}_{t\|a-x\|}(u_t)\subset \mathbb{B}_{r}(a)$. Suppose that there exists $t\in ]0,1]$ such that  $S\cap \mathbb{B}_{t\|a-x\|}(u_t) = \emptyset$. It then follows that $a-x\in N^P(S;x)$. Since $S$ is $\rho$-uniformly prox-regular, we deduce that  $\mathbb{B}_r(a)\cap S = \emptyset$ (because $r<\rho$), which contradicts our assumption. Therefore, for all $t\in ]0,1]$, we have $S\cap \mathbb{B}_{t\|a-x\|}(u_t) \neq \emptyset$. Hence, there exists a sequence $(y_n)$ with $y_n\in S\cap \mathbb{B}_{t_n\|a-x\|}(u_{t_n})$ for some sequence $(t_n)\searrow 0$, and $y_n\to x$ as $n\to\infty$. Consequently, $x\in \cl{(S\cap \mathbb{B}_r(a))}$. The $\rho$-uniform prox-regularity of $S\cap \mathbb{B}_r[a]$ follows from \cite[Corollary 16.17]{MR4659163}.
	\end{proof}
    
The following proposition establishes a new topological property of uniformly prox-regular sets in Hilbert spaces: connectedness and path connectedness are equivalent.
    \begin{proposition}\label{path-connected-prox-reg}
    Let $S\subset \H$ be a closed and $\rho$-uniformly prox-regular set. Then $S$ is connected if and only if it is path-connected.  
    \end{proposition}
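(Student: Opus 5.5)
Path-connected implies connected in any topological space, so only the converse needs work. The plan is a chaining argument that uses the projection from Proposition~\ref{prox_reg_prop}. Fix $\gamma\in]0,1[$ and set $\delta:=\gamma\rho$ (any finite $\delta>0$ if $\rho=+\infty$). The key local claim is: \emph{if $x,y\in S$ with $\|x-y\|<\delta$, then there is a continuous path in $S$ from $x$ to $y$.} To see this, note that every point of the segment $[x,y]_{\H}$ is within $\|x-y\|<\gamma\rho$ of $x\in S$, so $[x,y]_{\H}\subset U^\gamma_\rho(S)$. By Proposition~\ref{prox_reg_prop}(b), $\proj_S$ is well defined and Lipschitz on $U_\rho^\gamma(S)$. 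Hence
\[
c(t):=\proj_S\bigl(tx+(1-t)y\bigr)\ \text{ for } t\in[0,1]
\]
is continuous and takes values in $S$. It satisfies $c(1)=\proj_S(x)=x$ and $c(0)=\proj_S(y)=y$, since points of $S$ are their own projections.

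The global step is standard. For $x_0\in S$, let $P$ be the path component of $x_0$ in $S$. We show $P$ is open and closed in $S$, so that connectedness gives $P=S$.

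\textbf{$P$ is open.} If $z\in P$ and $w\in S\cap\mathbb{B}_\delta(z)$, the local claim connects $z$ to $w$. Concatenating with a path from $x_0$ to $z$ gives $w\in P$.

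\textbf{$P$ is closed in $S$.} If $w\in S$ is a limit of points $z_n\in P$, pick $n$ with $\|z_n-w\|<\delta$. The local claim joins $z_n$ to $w$, so $w\in P$.

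No step is really hard. The only point requiring care is the local claim: one must check that the whole segment stays inside $U^\gamma_\rho(S)$, where $\proj_S$ is single-valued and continuous. This is exactly why the step size is restricted to $\delta<\rho$. Lemma~\ref{lemma-prox-inter-ball} is not needed.
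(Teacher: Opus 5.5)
Your proof is correct, but it follows a somewhat different route from the paper's. The paper works in the open tube. It first shows that $U_\rho^\gamma(S)$ is connected: a nonconstant continuous $g\colon U_\rho^\gamma(S)\to\{0,1\}$ would be constant on each segment $[x_i,\proj_S(x_i)]_{\H}$, and so would separate two points of $S$. It then cites the classical fact that an open connected subset of a normed space is path-connected. Finally, it pushes paths back into $S$ through the continuous surjection $\proj_S\colon U_\rho^\gamma(S)\to S$.

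You instead stay inside $S$. Projecting short segments shows that $S$ is uniformly locally path-connected at scale $\gamma\rho$, and the clopen path-component argument finishes the proof. In effect, you run directly in $S$ the argument the paper delegates to the cited theorem on open connected sets, with projected segments playing the role that convex balls play there.

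Your version is self-contained. It also makes the local structure explicit: two points at distance $<\gamma\rho$ are joined by a path that is $(1-\gamma)^{-1}\|x-y\|$-Lipschitz in $t$, so in a connected $S$ any two points are joined by a finite chain of such paths. The paper's version is more structural. It treats $S$ as a continuous retract of the open set $U_\rho^\gamma(S)$, lets connectedness pass to the tube, where it upgrades to path-connectedness at no cost, and brings path-connectedness back to $S$ by taking images under the retraction.
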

    \begin{proof}
    Every path-connected set is connected. Suppose that $S$ is connected.    Fix $\gamma\in ]0,1[$. We first claim that $U_\rho^\gamma(S)$ is connected. Assume by contradiction that $U_\rho^\gamma(S)$ is not connected. Then there exists a nonconstant continuous map $g\colon U_\rho^\gamma(S)\to \{0,1\}$. Hence there exist $x_0,x_1\in U_\rho^\gamma(S)$ such that $g(x_0) = 0$ and $g(x_1) = 1$. Since $S$ is $\rho$-uniformly prox-regular and $U_\rho^\gamma(S)\subset U_{\rho}(S)$, the metric projection onto $S$ is single-valued on $U_\rho^\gamma(S)$. Define $y_i := \proj_{S}(x_i)$ for $i\in\{0,1\}$. Then $y_i\in S$.  Observe that $[x_i,y_i]_{\H}\subset U_\rho^\gamma(S)$ for $i\in \{0,1\}$. Since each segment is connected and $g$ is continuous, $g$ is constant on $[x_i,y_i]_{\H}$. Therefore $g(y_0) = 0$ and $g(y_1) = 1$. However, $y_0, y_1\in S$, which contradicts the connectedness of  $S$, because the restriction of $g$ to $S$ must be constant.  On the other hand, since $U_\rho^\gamma(S)$ is open and connected, it is path-connected (see, e.g., \cite[Theorem V.5.5]{MR478089}). Finally, note that $\proj_S\colon U_\rho^\gamma(S)\to S$ is continuous (see Proposition \ref{prox_reg_prop}) and that $\proj_S(U_\rho^\gamma(S)) = S$. Since the continuous image of a path-connected set is path-connected (see, e.g., \cite[p. 115]{MR478089}), we conclude that $S$ is path-connected.
    \end{proof}

\subsection{Set-Valued Maps and Lusin’s Theorem}

 Let $X,Y$ be topological spaces. We recall that a set-valued mapping $\mathscr{C}\colon X \rightrightarrows  Y$ is lower semicontinuous (lsc for short) at $\bar x\in X$ if for every open set $U\subset Y$ such that $\mathscr{C}(\bar x)\cap U\neq \emptyset$, the set $\{ x\in X : \mathscr{C}(x)\cap U\neq \emptyset \}$ is a neighborhood of $\bar x$.
    
Lusin's theorem will be a useful tool in our analysis. We state below a version for nonseparable Hilbert spaces. 
    \begin{proposition}
        Let $(X,d)$ be a complete and separable metric space and $\nu$ be a finite measure defined on the Borel sets of $X$. Suppose that $f\colon X\to \H$ is a strongly measurable function. Then, for every $\epsilon>0$, there exists a compact set $K\subset X$ such that $\nu(X\setminus K)<\epsilon$ and $f|_{K}$ is continuous.
    \end{proposition}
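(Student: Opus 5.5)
The plan is to reduce this to the classical Lusin theorem for real-valued functions, using the separable-valuedness that comes with strong measurability. Since $f$ is strongly measurable, it is the pointwise limit of simple functions $f_n\colon X\to\H$, each taking finitely many values on Borel sets. I would first note that $f$ is then $\nu$-essentially separably valued; in fact, being an everywhere pointwise limit of simple functions, $f(X)$ lies in the closed separable subspace $\H_0:=\cl\operatorname{span}\{f_n(x): n\in\N,\ x\in X\}$. So we may replace $\H$ by the separable Hilbert space $\H_0$. This disposes of nonseparability, and from here on the argument is the standard one for Polish-valued maps.

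\textbf{Continuity of the simple functions on large compacts.} Since $\nu$ is a finite Borel measure on a complete separable metric space, it is regular and tight. That is, every Borel set $B$ contains compact subsets $K'$ with $\nu(B\setminus K')$ arbitrarily small. I would use this as follows.
\begin{enumerate}
\item For each $n$, the function $f_n$ is determined by a finite Borel partition $\{B_{n,1},\dots,B_{n,m_n}\}$ of $X$.
\item Choose compacts $K_{n,j}\subset B_{n,j}$ with $\nu(B_{n,j}\setminus K_{n,j})<\epsilon 2^{-n-2}/m_n$.
\item Put $L_n:=\bigcup_j K_{n,j}$. This is a finite union of pairwise disjoint compacts, hence they are at positive mutual distance, so $f_n|_{L_n}$ is locally constant and therefore continuous. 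Moreover $\nu(X\setminus L_n)<\epsilon 2^{-n-2}$.
\end{enumerate}

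\textbf{Uniform convergence via Egorov.} The real functions $x\mapsto\|f_n(x)-f(x)\|$ are measurable and converge to $0$ pointwise. By Egorov's theorem, which applies because $\nu$ is finite, there is a Borel set $E$ with $\nu(X\setminus E)<\epsilon/4$ on which $f_n\to f$ uniformly. By inner regularity, I then pick a compact $K_0\subset E$ with $\nu(E\setminus K_0)<\epsilon/4$.

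\textbf{Conclusion.} Set $K:=K_0\cap\bigcap_n L_n$. This set is compact, being a closed subset of the compact $K_0$, and
\[
\nu(X\setminus K)<\epsilon/4+\epsilon/4+\sum_n\epsilon2^{-n-2}<\epsilon .
\]
On $K$, each $f_n|_K$ is continuous and $f_n\to f$ uniformly. Hence $f|_K$ is continuous, as a uniform limit of continuous functions.

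The main subtle point is the first step: the definition of strong measurability being used must guarantee separable range, at least almost everywhere. If it is only $\nu$-a.e. separably valued, or only a.e. a limit of simple functions, I would discard a null Borel set $N$ at the start and run the same argument on $X\setminus N$, shrinking the final compact accordingly. This still yields a compact set, since the inner-regularity step is performed inside $X\setminus N$.
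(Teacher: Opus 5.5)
Your proof is correct, but it takes a different route from the paper, whose proof is purely by citation. The paper uses Pettis's theorem \cite[Theorem II.2]{MR0453964} to see that $f$ is ($\nu$-essentially) separably valued and measurable. It then invokes a general Lusin theorem for measurable maps into separable metric spaces \cite[Theorem 7.1.13]{MR2267655}.

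You instead reprove Lusin's theorem directly from the simple-function definition of strong measurability:
\begin{enumerate}
\item Inner regularity of $\nu$ (tightness on the Polish space $X$) makes each simple approximant $f_n$ locally constant, hence continuous, on a large compact $L_n$.
\item Egorov applied to the real measurable functions $\|f_n-f\|$ gives uniform convergence on a large compact $K_0$.
\item Intersecting these compacts yields $K$.
\end{enumerate}

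Your route is self-contained and avoids Pettis's theorem and weak measurability altogether. In fact your first step, passing to the separable subspace $\H_0$, is never used afterwards, since simple functions have finite range; you can drop it. Your argument also shows that completeness and separability of $X$ matter only through the inner regularity of $\nu$ by compact sets.

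The paper's route is shorter and places the result in its standard framework: the Lusin theorem for Radon measures with separable-metric targets. The price is that it relies on two external results.
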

    \begin{proof}
        This follows directly from Pettis's Theorem (see, e.g., \cite[Theorem II.2]{MR0453964}) together with \cite[Theorem 7.1.13]{MR2267655}.
    \end{proof}

    \begin{lemma}\label{patching-lemma}
Let $C\colon [0,T] \rightrightarrows  \H$ be lower semicontinuous, let $K\subset [0,T]$ be compact, and let
$y\colon K\to \H$ be continuous with $y(t)\in C(t)$ for all $t\in K$. Define
\[
D(t):=
\begin{cases}
\{y(t)\}, & t\in K,\\
C(t), & t\in [0,T]\setminus K.
\end{cases}
\]
Then $D$ is lower semicontinuous on $[0,T]$.
\end{lemma}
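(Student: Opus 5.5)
We verify lower semicontinuity of $D$ at each $\bar t\in[0,T]$ directly from the definition. Fix $\bar t$ and an open set $U\subset\H$ with $D(\bar t)\cap U\neq\emptyset$. We must find $\delta>0$ such that $D(t)\cap U\neq\emptyset$ for all $t\in[0,T]$ with $|t-\bar t|<\delta$. We split into two cases according to whether $\bar t\in K$.

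\emph{Case 1: $\bar t\in[0,T]\setminus K$.} Since $K$ is compact, $[0,T]\setminus K$ is relatively open in $[0,T]$. Hence there is $\delta_1>0$ such that $t\notin K$ whenever $|t-\bar t|<\delta_1$, and on that neighborhood $D(t)=C(t)$. By lower semicontinuity of $C$ at $\bar t$, and since $C(\bar t)\cap U=D(\bar t)\cap U\neq\emptyset$, there is $\delta_2>0$ with $C(t)\cap U\neq\emptyset$ for $|t-\bar t|<\delta_2$. We take $\delta=\min\{\delta_1,\delta_2\}$.

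\emph{Case 2: $\bar t\in K$.} Now $D(\bar t)=\{y(\bar t)\}$, so $y(\bar t)\in U$. Since $U$ is open, we choose $\epsilon>0$ with $\mathbb{B}_\epsilon(y(\bar t))\subset U$. Continuity of $y$ on $K$ gives $\delta_1>0$ such that $y(t)\in U$ for $t\in K$ with $|t-\bar t|<\delta_1$; for such $t$ we have $D(t)\cap U\neq\emptyset$. For $t\notin K$ we use $C$ instead: the hypothesis $y(\bar t)\in C(\bar t)$ gives $C(\bar t)\cap U\neq\emptyset$, so lower semicontinuity of $C$ at $\bar t$ provides $\delta_2>0$ with $C(t)\cap U\neq\emptyset$ for all $|t-\bar t|<\delta_2$. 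In particular $D(t)\cap U=C(t)\cap U\neq\emptyset$ for $t\notin K$ in that range. We take $\delta=\min\{\delta_1,\delta_2\}$.

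The only mildly delicate point is Case 2. Near $\bar t\in K$, points of $K$ and of its complement may both accumulate, so the two mechanisms (continuity of $y$ and lower semicontinuity of $C$) must be combined. This works precisely because $y(\bar t)\in C(\bar t)$, which lets the same open set $U$ be used for both. Neither case needs any prox-regularity; the lemma is purely topological.
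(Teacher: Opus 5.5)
Your proof is correct, but it takes a different route from the paper's. The paper does not work with open sets directly. It invokes the metric-space criterion that $D$ is lower semicontinuous if and only if $t\mapsto \operatorname{dist}(x;D(t))$ is upper semicontinuous for every $x\in\H$, and then checks upper semicontinuity sequentially. At a point $\bar t\in K$ it passes to a subsequence realizing the $\limsup$ and splits the indices into two groups. Indices with $t_{n_k}\in K$ are handled by continuity of $y$. Indices with $t_{n_k}\notin K$ are handled by upper semicontinuity of $\operatorname{dist}(x;C(\cdot))$ together with $\operatorname{dist}(x;C(\bar t))\le\|x-y(\bar t)\|$.

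Your argument is the neighborhood version of the same case split. You intersect the neighborhood from continuity of $y$ with the one from lower semicontinuity of $C$. The compatibility $y(\bar t)\in C(\bar t)$ plays exactly the role that the inequality $\operatorname{dist}(x;C(\bar t))\le\|x-y(\bar t)\|$ plays in the paper.

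Your version is self-contained and more general. It needs no external characterization, no subsequence bookkeeping, and no metric structure on the target, so it holds verbatim for any topological target space and any domain in which $K$ is closed. The paper's version relies on the distance-function criterion in a metric space. As in the paper, only the closedness of $K$ is actually used, not its compactness. Also, the ball $\mathbb{B}_\epsilon(y(\bar t))\subset U$ you introduce in Case 2 is never needed: continuity of $y$ at $\bar t$ together with openness of $U$ already gives $\delta_1$.
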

\begin{proof}
Let $x\in \H$ and define $\phi(t) := \operatorname{dist}(x;D(t))$ for $t\in [0,T]$. By \cite[Proposition 6.1.15 (a)]{MR2527754},  it suffices to prove that $\phi$ is upper semicontinuous. Fix $\bar t\in [0,T]$. If $\bar t\notin K$, then, since $K$ is closed, we have $\operatorname{dist}(\bar t;K)>0$. In particular, there exists $\delta>0$ such that $]\bar t-\delta,\bar t+\delta[\cap K =\emptyset$, and therefore $D( t)=C(t)$ for all $t\in [0,T]\cap ]\bar t-\delta,\bar t+\delta[$. Since $C$ is lower semicontinuous, \cite[Proposition 6.1.15 (a)]{MR2527754} implies that $t\mapsto \phi(t)$ is upper semicontinuous at $\bar t$. Assume now that $\bar t\in K$, and let $(t_n)$ be any sequence with $t_n\to \bar t$. Passing to a subsequence $(t_{n_k})$, we may assume that  $$\limsup_{n\to \infty} \phi(t_n) = \lim_{k\to \infty} \phi(t_{n_k}).$$ 
    Set 
    $$J_1 := \{ k\in \N : t_{n_k}\in K \}, \qquad  J_2 := \{k\in \N : t_{n_k}\notin K\}.$$  
    At least one of $J_1$ or $J_2$ is infinite. Moreover, since $(\phi(t_{n_k}))$ converges, every subsequence has the same limit.\\
    If $J_1$ is infinite, then for $k\in J_1$ we have $D(t_{n_k})=\{y(t_{n_k})\}$, hence
    \begin{equation*}
        \begin{aligned}
        \lim_{k\to \infty} \phi(t_{n_k}) &= \lim_{k\to \infty, k\in J_1} \|x-y(t_{n_k})\|=\|x-y(\bar t)\| = \phi(\bar t),
        \end{aligned}
    \end{equation*}
    where we used the continuity of $y$ on $K$. If $J_2$ is infinite, then for $k\in J_2$ we have $D(t_{n_k})=C(t_{n_k})$, and therefore
    \begin{equation*}
        \begin{aligned}
        \lim_{k\to \infty} \phi(t_{n_k}) &= \lim_{k\to \infty, k\in J_2} \operatorname{dist}(x;C(t_{n_k}))\leq \operatorname{dist}(x;C(\bar t)) \leq \|x-y(\bar t)\|=\phi(\bar t).
        \end{aligned}
    \end{equation*}
    Here we used that $t\mapsto \operatorname{dist}(x;C(t))$ is upper semicontinuous and that $y(\bar t)\in C(\bar t)$. In either case, we obtain 
    $$
    \limsup_{n\to\infty}\phi(t_n)=\lim_{k\to \infty}\phi(t_{n_k})\le \phi(\bar t),
    $$
    which proves that $\phi$ is upper semicontinuous at $\bar t$. This concludes the proof.
\end{proof}

\subsection{Radon measures}

Throughout, $\nu$ denotes a \emph{positive Radon measure} on $[0,T]$, that is,
a Borel measure on $[0,T]$ which is finite on compact sets and inner regular:
\[
\nu(B)=\sup\{\nu(K): K\subset B,\ K \textrm{ compact}\}
\qquad\textrm{for every Borel set }B\subset[0,T].
\]
Since $[0,T]$ is compact, every positive Radon measure $\nu$ on $[0,T]$ is
finite. When we refer to a complete measure $\nu$, it means that $\nu$ is defined in an  extension of the Borelians of $[0,T]$, let us say a $\sigma$-algebra $\mathcal{A}$ such that the measure space $([0,T],\mathcal{A},\nu)$ is complete.

Let $1\le p<\infty$. The Bochner space $L^p_\nu([0,T];\H)$ consists of (strongly)
$\nu$-measurable mappings $w\colon[0,T]\to\H$ such that
\[
\int_{0}^{T}\|w(t)\|^{p}\,d\nu(t)<+\infty,
\]
where mappings are identified if they agree $\nu$-a.e. It is a Banach space
for the norm
\[
\|w\|_{L^p_\nu}:=
\Bigl(\int_{0}^{T}\|w(t)\|^{p}\,d\nu(t)\Bigr)^{1/p}.
\]
Moreover, $L^\infty_\nu([0,T];\H)$ denotes the space of functions $w\colon[0,T]\to\H$ which are
strongly $\nu$-measurable and $\|w\|_{L^\infty_\nu}:=\operatorname*{ess\,sup}_{t\in[0,T]}\|w(t)\|<+\infty$. We refer to \cite{MR2267655} for more details.
\subsection{Differential measures}

Solutions to measure differential inclusions are typically sought among
trajectories of bounded variation (see, e.g., \cite{MR3274844,MR3716922,MR4950495,MR637728}). We recall the standard definition on $[0,T]$. Let $T>0$ and let $x\colon[0,T]\to\H$ be a mapping. A \emph{subdivision} of
$[0,T]$ is a finite sequence
\[
\sigma=(t_0,\dots,t_k)\in\R^{k+1},
\qquad k\in\N,
\qquad 0=t_0<t_1<\cdots<t_k=T.
\]
To such a subdivision we associate
\[
S_\sigma(x):=\sum_{i=1}^k \bigl\|x(t_i)-x(t_{i-1})\bigr\|.
\]
The \emph{total variation} of $x$ on $[0,T]$ is the extended real number
\[
\operatorname{var}(x;[0,T]):=\sup_{\sigma\in\mathcal S} S_\sigma(x),
\]
where $\mathcal S$ denotes the set of all subdivisions of $[0,T]$. We say that
$x$ has \emph{bounded variation} on $[0,T]$ (briefly,
$x\in BV([0,T];\H)$) if $\operatorname{var}(x;[0,T])<+\infty$.  For further background on functions of bounded variation in Banach spaces, we
refer to \cite{MR206190,MR512194}.

Assume in addition that $x(\cdot)$ is \emph{right-continuous} on $[0,T]$ and of
bounded variation. Then one can associate with $x$ an $\H$-valued (countably
additive) vector measure $dx$ on $[0,T]$ (see, e.g., \cite{MR206190,MR512194})
such that, for all $0\le s\le t\le T$,
\[
x(t)=x(s)+\int_{]s,t]} dx.
\]
The measure $dx$ is called the \emph{differential measure} (or \emph{Stieltjes
measure}) of $x$. Conversely, let $\nu$ be a positive Radon measure on $[0,T]$, let
$x\colon[0,T]\to\H$ be a mapping, and let $v\in L^1_\nu([0,T];\H)$. Fix
$t_0\in[0,T]$ and assume that
\[
x(t)=x(t_0)+\int_{]t_0,t]} v(s)\,d\nu(s)
\qquad \textrm{ for all } t\in[0,T].
\]
Then $x(\cdot)$ is right-continuous on $[0,T]$, has bounded variation, and its
differential measure satisfies $dx = v\,d\nu$. In particular, $v$ is a Radon-Nikod\'ym density of $dx$ with respect to $\nu$.

	\section{Geometric Framework and Examples of Admissible Moving Sets}\label{section3}

In this section we state the geometric assumptions on the moving constraint set \(C(\cdot)\) used throughout the paper and present classes of admissible families for which these hypotheses can be verified. We require: uniform prox-regularity of the values \(C(t)\), lower semicontinuity in time, and a bounded selection-extension property along trajectories. The latter ensures that continuous selections prescribed on compact time sets and remaining in a fixed tube around a bounded admissible trajectory can be extended to global continuous selections with a uniform bound. This guarantees the availability of sufficiently rich test trajectories and will be used repeatedly in the variational arguments developed later.

Let $\H$ be a Hilbert space, and consider a set-valued mapping $C\colon [0,T] \rightrightarrows  \H$ satisfying the following assumptions:
	\begin{enumerate}
		\item[\namedlabel{H1}{$(\mathsf{H}_1)$}] For all $t\in [0,T]$, the set $C(t)$ is nonempty, closed, connected, and $\rho$-uniformly prox-regular.
        \item[\namedlabel{H2}{$(\mathsf{H}_2)$}] The mapping $C$ is lower semicontinuous on $[0,T]$.
		\item[\namedlabel{H3}{$(\mathsf{H}_3)$}] For every bounded mapping $x\colon [0,T]\to \H$ such that $x(t)\in C(t)$ for all $t\in [0,T]$ and all $\eta>0$, there exists a constant $R_{x,\eta}>0$ such that the following holds: for every compact set $K\subset [0,T]$, every continuous selection $y\colon K\to \H$ of $C$ with $\sup_{t\in K}\|y(t)-x(t)\|\leq \eta$, there exists a continuous selection $\bar y\colon [0,T]\to \H$ of $C$ such that
\[
\bar y=y \textrm{ on } K  \textrm{ and } \sup_{t\in [0,T]}\|\bar y(t)\|\le R_{x,\eta}.
\]
\end{enumerate}

The next proposition provides convenient sufficient conditions ensuring that
the bounded selection extension property along trajectories \ref{H3} holds.
In particular, \ref{H3} is satisfied for uniformly prox-regular moving sets
with uniformly bounded values, and also in the convex case, where no a priori
uniform boundedness of $t\mapsto C(t)$ is required.
\begin{proposition}
    Assume that \ref{H2} holds. Then each of the following additional assumptions implies \ref{H3}:
    \begin{itemize}
        \item[(i)]  Assumption \ref{H1} holds and there exists $r>0$ such that $\bigcup_{t\in [0,T]} C(t)\subset r\mathbb{B}$.
        \item[(ii)] For every $t\in [0,T]$, the set $C(t)$ is nonempty, closed and convex.
    \end{itemize}
\end{proposition}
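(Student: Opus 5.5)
Both parts rest on Michael's selection theorem in a form adapted to a closed subset of $[0,T]$. If $D\colon[0,T]\rightrightarrows\H$ is lower semicontinuous with nonempty closed convex values, then every continuous selection defined on a closed set extends to a continuous selection on all of $[0,T]$; note that $[0,T]$ is paracompact and $\H$ is Banach. Lemma~\ref{patching-lemma} turns a prescribed selection $y$ on $K$ into the lsc map $D$, which equals $\{y(t)\}$ on $K$ and $C(t)$ elsewhere. The task then reduces to producing a continuous selection of $D$ with a uniform bound.

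\textbf{Case (ii).} I would first handle the bound. Fix a bounded selection $x$ with $\|x(t)\|\le M$, and $\eta>0$; I take $R_{x,\eta}:=M+\eta+1$. Given $K$ and $y$ with $\sup_K\|y-x\|\le\eta$, the prescribed values satisfy $\|y(t)\|\le M+\eta<R_{x,\eta}$. Set $E(t):=C(t)\cap \mathbb{B}_{R_{x,\eta}}(0)$ (open ball) for $t\notin K$ and $E(t):=\{y(t)\}$ for $t\in K$. For $t\notin K$ this set is nonempty because $x(t)\in C(t)$ and $\|x(t)\|\le M$, and it is convex. The map $E$ is lsc on $[0,T]\setminus K$: the intersection of a convex lsc map with an open convex set is lsc wherever the intersection is nonempty, since near an interior point $z$ one can pull approximating points toward $z$. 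At points of $K$, lsc follows from the argument of Lemma~\ref{patching-lemma}, using $\|y(\bar t)\|<R_{x,\eta}$ so that $y(\bar t)$ lies in the open ball. Consequently $\cl E$ is lsc with closed convex values, and Michael's theorem applied to $\cl E$ yields a continuous selection $\bar y$ extending $y$. This selection satisfies $\|\bar y\|\le R_{x,\eta}$ and $\bar y(t)\in C(t)$ because $C(t)$ is closed.

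\textbf{Case (i).} Here the values need not be convex, and I would replace convexity by prox-regularity plus a local convexification. One option is to approximate: first take a continuous $\epsilon$-selection of $\cl\operatorname{conv} D$ (bounded by $r$), then project back onto $C(t)$ using Proposition~\ref{prox_reg_prop}. However, $\proj_{C(t)}$ varies with $t$ and $C$ is only lsc, so continuity in time of $t\mapsto\proj_{C(t)}(z(t))$ fails in general. The more robust route I would try uses a selection theorem for lsc maps with uniformly prox-regular values. Such maps admit continuous selections, and extensions from closed sets, by the Michael-type theorem for "paraconvex" / $\alpha$-paraconvex sets of E. Michael and Semenov. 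Uniformly $\rho$-prox-regular sets are paraconvex at scales below $\rho$; Lemma~\ref{lemma-prox-inter-ball} is precisely what ensures that the intersection with a small ball stays prox-regular and is the closure of its open-ball trace, which keeps lsc. Connectedness (hence path-connectedness by Proposition~\ref{path-connected-prox-reg}) is needed so that a global selection through all of $C(t)$ exists rather than one confined to a component. Once a continuous extension $\bar y$ exists, the bound is automatic: $R_{x,\eta}:=r$ works since all values lie in $r\mathbb{B}$.

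\textbf{Main obstacle.} The hard step is (i): justifying the selection/extension theorem for lsc maps with prox-regular (nonconvex) values. I would try to invoke the paraconvex Michael theorem directly, verifying its hypothesis by quantifying how far $\cl\operatorname{conv}(C(t)\cap\mathbb{B}_s[z])$ lies from $C(t)$ for $s<\rho$, via the defining inequality of uniform prox-regularity. If that fails, I would fall back on an iterative $\epsilon_n$-selection and projection scheme with geometrically decreasing errors, in the style of Michael's proof.
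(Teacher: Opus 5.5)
\textbf{Part (ii).} Your treatment of (ii) is correct and follows the paper's route: open-ball truncation, patching via Lemma~\ref{patching-lemma}, closure, then Michael's theorem. Using $x$ itself to see that $C(t)\cap\mathbb{B}_{R_{x,\eta}}(0)\neq\emptyset$ is slightly simpler than the paper, which first takes an auxiliary Michael selection $z$ of $C$ and needs $R_{x,\eta}>\max\{M,R_0\}$.

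\textbf{Part (i): the gap.} The selection theorem you plan to invoke does not apply. Michael's paraconvex theorem needs a fixed $\alpha<1$ with $d(q;P)\le\alpha r$ for all $q\in\operatorname{conv}(P\cap V)$ and for open balls $V$ of \emph{every} radius $r$. Semenov's functional version still needs $\alpha(r)<1$ at every $r$. Uniformly prox-regular sets fail this at radii of order $\rho$, even when connected. Let $P$ be a circle of radius $\rho$ centered at $c$. Then $\mathbb{B}_{\rho+\epsilon}(c)\supset P$, $c\in\operatorname{conv}P$ and $d(c;P)=\rho$, so $\alpha\ge\rho/(\rho+\epsilon)\to1$. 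Balls of radius exactly $\rho$ centered slightly off $c$ give ratios arbitrarily close to $1$, so even $\alpha(\rho)<1$ fails.

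Paraconvexity at scales below $\rho$ cannot suffice on its own either. In $\H=\R$, take the map equal to $\{1\}$ on $[0,\tfrac13]$, to $\{-1,1\}$ on $]\tfrac13,\tfrac23[$, and to $\{-1\}$ on $[\tfrac23,1]$. It is lsc, has $1$-uniformly prox-regular values, and admits no continuous selection. So connectedness of the values, together with the one-dimensionality of $[0,T]$, must be used globally, and your proposal never says how. In a Michael-type iteration this is exactly the missing first step: a continuous approximate selection at a scale below $\rho$, which local prox-regular geometry cannot produce.

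\textbf{How the paper closes it.} The patched map $D$ of Lemma~\ref{patching-lemma} is lsc. Its values, $\{y(t)\}$ on $K$ and $C(t)$ off $K$, are closed, $\rho$-uniformly prox-regular and connected, hence path-connected by Proposition~\ref{path-connected-prox-reg}. Then \cite[Theorem~5]{MR2406391}, a selection theorem for lsc maps with uniformly prox-regular path-connected values, gives a continuous $\bar y\in D(t)$. Thus $\bar y=y$ on $K$ and $\|\bar y\|_\infty\le r$.

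\textbf{An alternative within Michael's theory.} The right tool would be his finite-dimensional selection theorem: a domain of dimension $1$, with $C^0$ and equi-$LC^0$ values. It yields extension from the closed set $K$ directly. Equi-$LC^0$ follows from Proposition~\ref{prox_reg_prop}: for $a,b\in C(t)$ with $\|a-b\|<2\gamma\rho$, the segment $[a,b]_{\H}$ lies in $U^\gamma_\rho(C(t))$. Hence $s\mapsto\proj_{C(t)}((1-s)a+sb)$ joins $a$ to $b$ inside $C(t)$ with diameter at most $(1-\gamma)^{-1}\|a-b\|$, uniformly in $t$.
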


\begin{proof} Fix a bounded mapping \(x\colon [0,T]\to \H\) such that \(x(t)\in C(t)\) for all \(t\in [0,T]\) and $\eta>0$. Fix a compact set $K\subset [0,T]$, and a continuous selection $y\colon K\to \H$ of $C$ such that $\sup_{t\in K}\|y(t)-x(t)\|\leq \eta$. Define
$$
D(t):=\begin{cases}
    \{y(t)\}, & \textrm{if } t\in K,\\
    C(t), & \textrm{if } t\in [0,T]\setminus K.
\end{cases}
$$
By Lemma~\ref{patching-lemma} and \ref{H2}, the set-valued mapping \(D\) is lower semicontinuous. Moreover, for every \(t\in [0,T]\), the set \(D(t)\) is nonempty, closed, \(\rho\)-uniformly prox-regular, and connected. Hence, by Proposition \ref{path-connected-prox-reg}, each value $D(t)$ is path-connected. Therefore, by  \cite[Theorem~5]{MR2406391}, there exists a continuous selection $\bar{y}\colon [0,T]\to \H$ such that $\bar{y}(t)\in D(t)$ for all $t\in [0,T]$. 
In particular, $\bar{y}=y$ on $K$. Assume that assumption (i) holds, then there exists $r>0$ such that $C(t)\subset r\mathbb{B}$ for all $t\in [0,T]$. Since $\bar{y}(t)\in D(t)\subset C(t)$ for all $t\in [0,T]$, we obtain that $\sup_{t\in [0,T]}\Vert \bar{y}(t)\Vert \leq r$. Thus, \ref{H3} holds with $R_{x,\eta}:=r$.\\
To prove \ref{H3} under assumption~(ii), set $M:=\sup_{t\in [0,T]}\Vert x(t)\Vert$. Then, for every $t\in K$,
$$
\Vert y(t)\Vert \leq \Vert y(t)-x(t)\Vert +\Vert x(t)\Vert \leq \eta +\sup_{t\in [0,T]}\Vert x(t)\Vert=M.
$$
By Michael's selection theorem (see, e.g., \cite[Theorem 6.3.6]{MR2527754}), there exists a continuous selection $z\colon [0,T] \to \H$ such that $z(t)\in C(t)$ for all $t\in [0,T]$. Let $R_0:=\sup_{t\in [0,T]}\Vert z(t)\Vert$, choose $R_{x,\eta}>\max\{M,R_0\}$, and define
$$
\Phi(t):=C(t)\cap \mathbb{B}_{R_{x,\eta}}(0), \qquad t\in [0,T].
$$
Then, by \cite[Proposition 6.1.24]{MR2527754}, $\Phi$ is lower semicontinuous and has nonempty and convex values. Define
$$
\tilde{D}(t):=\begin{cases}
    \{y(t)\}, & \textrm{if } t\in K,\\
    \Phi(t), & \textrm{if } t\in [0,T]\setminus K.
\end{cases}
$$
By Lemma \ref{patching-lemma}, the set-valued map $\tilde{D}(\cdot)$ is lower semicontinuous, thus we take $F := \cl \tilde{D}$, by \cite[Proposition 6.1.19]{MR2527754} we have $F$ is lower semicontinuous, and it takes nonempty, convex and closed values. Applying Michael’s selection theorem, there exists a continuous selection $\tilde{y}\colon [0,T]\to \H$ such that $\tilde{y}(t)\in F(t)$ for all $t\in [0,T]$. In particular, $\tilde{y}=y$ on $K$. Moreover, if $t\in K$, then $\Vert \tilde{y}(t)\Vert=\Vert y(t)\Vert\leq M<R_{x,\eta}$, while if $t\in [0,T]\setminus K$, then $\tilde{y}(t)\in F(t)\subset \mathbb{B}_{R_{x,\eta}}[0]$, so $\Vert \tilde{y}(t)\Vert \leq R_{x,\eta}$.  Hence
$$
\sup_{t\in [0,T]}\Vert \tilde{y}(t)\Vert \leq R_{x,\eta}.
$$
Thus, \ref{H3} holds.
\end{proof}

The next example presents a class of moving sets for which property \ref{H3} can be verified explicitly.
\begin{example}
    Let $\psi\colon [0,T]\times \H\to \R$ be continuous and assume that it
maps bounded sets into bounded sets. Define $C(t):=\operatorname{epi}\psi(t,\cdot)\subset \H\times\R$.
Then $C(\cdot)$ satisfies \ref{H3}. Indeed, consider a bounded function $(x,\lambda)\colon [0,T]\to \H\times \R$ such that $\psi(t,x(t))\leq \lambda(t)$ and $\eta>0$. Fix a compact set $K\subset [0,T]$ and a continuous selection $(y,\alpha)$ of $C$ on $K$ such that $\|x-y\|_\infty\leq \eta$ and $\|\lambda-\alpha\|_\infty\leq \eta$. Define 
    \begin{equation*}
        M:= \max\{\|x\|_\infty,\|\lambda\|_\infty\}<\infty.
    \end{equation*}
    Note that $\max\{\|y\|_\infty,\|\lambda\|_\infty\}\leq M+\eta$. Consider a continuous extension of $y$, given by $\hat y\colon [0,T]\to \H$ such that $\|\hat y\|_\infty\leq M+\eta=:R$. Such selection exist since we can consider 
    \begin{equation*}
        D(t) = \begin{cases}
            \{y(t)\} : t\in K\\
            \mathbb{B}_{R}[0] : t\notin K
        \end{cases}
    \end{equation*}
    which is a lower semicontinuous set-valued mapping (by Proposition \ref{patching-lemma}) and takes convex, closed and nonempty values. Therefore, by Michael's selection theorem, $D$ has continuous selections. Take $\hat{R}>0$ such that $\psi([0,T]\times\B_{R}(0))\subset [-\hat R,\hat R]$. Finally, seeing that $\psi(t,\hat{y}(t))\leq \hat{R}$ for all $t\in [0,T]$, we can take $R_{x,\eta} := \hat{R} + R$. 
\end{example}
It is also natural to expect \ref{H3} to be stable under transformations
that transport the geometry of the constraints. In particular, starting
from any family that satisfies \ref{H3}, applying a smooth change of
variables whose forward and inverse maps preserve boundedness should
yield a new family that still satisfies \ref{H3}, since any admissible
selection on a compact time set can be pulled back, extended using
\ref{H3} for the original family, and then pushed forward. We do not
formalize this invariance principle here, as it would require
introducing additional assumptions and notation that are not needed for
the main results of this paper.

We end this section with an example of an admissible geometry for which assumption~\ref{H3} fails.
\begin{example}\label{ex:H3-fails}
Assumptions \ref{H1} and \ref{H2} do not imply \ref{H3}.  Let $\H=\R^2$ and
$T=1$. For $t\in[0,1]\setminus\{\tfrac12\}$ set $h(t):=|t-\tfrac12|^{-1}$
(so that $h(t)\ge 2$), and define
\[
C(t):=L_-\cup\Gamma^-\cup S_-(t)\cup B(t)\cup S_+(t)\cup\Gamma^+\cup L_+,
\qquad
C(\tfrac12):=L_-,
\]
where
\[
\begin{aligned}
L_-&:=\,]-\infty,-\tfrac32]\times\{0\},
&
L_+&:=[\tfrac32,+\infty[\,\times\{0\},\\[2pt]
\Gamma^-&:=\bigl\{(-\tfrac32,-\tfrac12)+\tfrac12(\cos\theta,\sin\theta):
\theta\in[0,\tfrac{\pi}{2}]\bigr\},
&
\Gamma^+&:=\bigl\{(\tfrac32,-\tfrac12)+\tfrac12(\cos\theta,\sin\theta):
\theta\in[\tfrac{\pi}{2},\pi]\bigr\},\\[2pt]
S_\pm(t)&:=\{\pm1\}\times[-h(t),-\tfrac12],
&
B(t)&:=\bigl\{(0,-h(t))+(\cos\theta,\sin\theta):\theta\in[\pi,2\pi]\bigr\}.
\end{aligned}
\]
Thus, for $t\neq\tfrac12$, the set $C(t)$ consists of the two horizontal
half-lines $L_\pm$, joined by a $\mathsf U$-shaped bridge of depth $h(t)$:
two vertical strands $S_\pm(t)$, the lower semicircle $B(t)$ of radius $1$,
and two circular fillets $\Gamma^\pm$ of radius $\tfrac12$ making the
junction of class $C^{1,1}$ (see Figure~\ref{fig:H3-fails}). \begin{figure}[htbp]
\centering
\begin{tikzpicture}[scale=0.8, line cap=round, line join=round]
  \draw[gray!50, thin] (-4.5,0) -- (4.5,0);
  \draw[very thick, blue!65!black]
    (-4.3,0) -- (-1.5,0)                           
    arc[start angle=90, end angle=0, radius=0.5]   
    -- (-1,-3)                                     
    arc[start angle=180, end angle=360, radius=1]  
    -- (1,-0.5)                                    
    arc[start angle=180, end angle=90, radius=0.5] 
    -- (4.3,0);                                    
  \node[above, blue!65!black] at (-3.2,0.02) {\footnotesize $L_-$};
  \node[above, blue!65!black] at (3.2,0.02) {\footnotesize $L_+$};
  \node[left,  blue!65!black] at (-1.05,-1.8) {\footnotesize $S_-(t)$};
  \node[right, blue!65!black] at (1.05,-1.8) {\footnotesize $S_+(t)$};
  \node[blue!65!black] at (0,-3.35) {\footnotesize $B(t)$};
  \draw[dashed, gray] (0,0) -- (0,-2.85);
  \draw[<->, gray] (0.12,-0.12) -- (0.12,-2.85);
  \node[right, gray] at (0.07,-1.45) {\footnotesize $h(t)$};
  \fill[black] (0,-4) circle (1.4pt);
  \node[below] at (0,-4.05) {\footnotesize $(0,-h(t)-1)$};
  \draw[->, thick, red!65!black] (2.1,-3.2) -- (2.1,-4.5);
  \node[right, red!65!black, align=left] at (2.2,-3.85)
    {\footnotesize the bridge escapes\\[-2pt]\footnotesize as $t\to\tfrac12$};
  \fill[black] (-2,0) circle (1.6pt);
  \node[above] at (-2,0.05) {\footnotesize $(-2,0)$};
  \fill[black] (2,0) circle (1.6pt);
  \node[above] at (2,0.05) {\footnotesize $(2,0)$};
  \node at (0,1) {\small $C(t)$, \ $t\neq\tfrac12$};
\end{tikzpicture}
\qquad
\begin{tikzpicture}[scale=0.8]
  \draw[gray!50, thin] (-4.5,0) -- (1.2,0);
  \draw[very thick, blue!65!black] (-4.3,0) -- (-1.5,0);
  \fill[blue!65!black] (-1.5,0) circle (1.6pt);
  \node[below] at (-1.5,-0.08) {\footnotesize $(-\tfrac32,0)$};
  \node[above, blue!65!black] at (-3.2,0.02) {\footnotesize $L_-$};
  \fill[black] (-2,0) circle (1.6pt);
  \node[above] at (-2,0.08) {\footnotesize $(-2,0)$};
  \path (0,-4.6) -- (0,1);
  \node at (-1.6,1) {\small $C(\tfrac12)=L_-$};
\end{tikzpicture}
\caption{The moving set of Example~\ref{ex:H3-fails}. Left: for
$t\neq\tfrac12$, the half-lines $L_\pm$ are joined by a $C^{1,1}$ bridge
of depth $h(t)=|t-\tfrac12|^{-1}$. Right: at $t=\tfrac12$ only $L_-$
remains. Any continuous selection passing from $(2,0)$ to $L_-$ near
$t=\tfrac12$ must traverse the bottom of the bridge, forcing its
uniform norm to exceed $1/\delta$; hence \ref{H3} fails while
\ref{H1} and \ref{H2} hold.}
\label{fig:H3-fails}
\end{figure}
As
$t\to\tfrac12$, the bridge escapes to infinity, and at $t=\tfrac12$ only
the left half-line survives. Note that $L_-\subset C(t)$ and
$L_+\subset C(t)$ for all $t\neq\tfrac12$, while $L_-\subset C(t)$ for
\emph{all} $t\in[0,1]$.\\
\emph{Verification of \ref{H1}.} For $t\neq\tfrac12$, the set $C(t)$ is a
closed, connected, complete embedded curve of class $C^{1,1}$, whose
curvature is bounded by $2$  and whose distinct strands are at mutual distance at least
$2$ (the two vertical strands), the remaining pairs of pieces being at
distance at least $\tfrac32$ since $h(t)\geq 2$. It is readily verified that $C(t)$ is uniformly $\rho$-prox-regular, with $\rho=\tfrac12$. The set
$C(\tfrac12)$ is closed and convex, hence $\rho$-uniformly prox-regular
for every $\rho>0$.\\
\emph{Verification of \ref{H2}.} For $s,t$ in a compact subinterval of
$[0,1]\setminus\{\tfrac12\}$, the sets $C(t)$ and $C(s)$ differ only by
the depth of the bridge, and a direct computation gives
$\operatorname{Haus}(C(t),C(s))\le |h(t)-h(s)|$; hence $C(\cdot)$ is
(Hausdorff) continuous, in particular lower semicontinuous, on
$[0,1]\setminus\{\tfrac12\}$. At $\bar t=\tfrac12$, lower semicontinuity
holds because $C(\tfrac12)=L_-\subset C(t)$ for every $t\in[0,1]$: any
open set meeting $C(\tfrac12)$ meets $C(t)$ for all $t$. Observe that
lower semicontinuity is precisely the right one-sided notion here: the
sudden collapse of the moving set at $t=\tfrac12$ is compatible with
\ref{H2} but not with Hausdorff continuity.\\
\emph{Failure of \ref{H3}.} Consider the bounded admissible mapping
$x(t):\equiv(-2,0)\in L_-\subset C(t)$ for all $t\in[0,1]$, and let
$\eta:=4$. For $\delta\in\,]0,\tfrac14[$ define the compact set
$K_\delta:=[0,\tfrac12-\delta]\cup[\tfrac12+\delta,1]$ and the mapping
\[
y_\delta(t):=
\begin{cases}
(-2,0), & t\in[0,\tfrac12-\delta],\\
(2,0), & t\in[\tfrac12+\delta,1],
\end{cases}
\]
which is a continuous selection of $C$ on $K_\delta$ (both pieces of
$K_\delta$ are relatively open and closed) satisfying
$\sup_{t\in K_\delta}\|y_\delta(t)-x(t)\|=4=\eta$. Let
$\bar y\colon[0,1]\to\R^2$ be \emph{any} continuous selection of $C$
with $\bar y=y_\delta$ on $K_\delta$, and write
$\bar y=(\bar y_1,\bar y_2)$. Since
$\bar y(\tfrac12)\in C(\tfrac12)=L_-$, we have
$\bar y_1(\tfrac12)\le -\tfrac32$, while
$\bar y_1(\tfrac12+\delta)=2$. By the intermediate value theorem there
exists $\tau\in\,]\tfrac12,\tfrac12+\delta]$ such that
$\bar y_1(\tau)=0$. On the other hand,
$C(\tau)\cap(\{0\}\times\R)=\{(0,-h(\tau)-1)\}$, the lowest point of the
bridge. Therefore
\[
\sup_{t\in[0,1]}\|\bar y(t)\|\ \ge\ \|\bar y(\tau)\|
\ =\ h(\tau)+1\ >\ \frac{1}{\delta}.
\]
Since $\delta\in\,]0,\tfrac14[$ is arbitrary, no constant $R_{x,\eta}$
can satisfy the requirement of \ref{H3}: the property fails. We
emphasize that, for each \emph{fixed} $\delta$, global continuous
extensions of $y_\delta$ do exist: the set-valued mapping equal to
$\{y_\delta(t)\}$ on $K_\delta$ and to $C(t)$ elsewhere is lower
semicontinuous by Lemma~\ref{patching-lemma}, takes
$\rho$-uniformly prox-regular and path-connected values by
Proposition~\ref{path-connected-prox-reg}, and hence admits a continuous
selection by \cite[Theorem~5]{MR2406391}. Thus, under \ref{H1} and
\ref{H2}, the extension itself is always available, and the genuine
content of \ref{H3} is the \emph{uniform bound} $R_{x,\eta}$,
independent of the compact set $K$ and of the partial selection $y$.
\end{example}

\section{Equivalence Between Differential-Measure and Integral Solutions}\label{section4}

In this section, we establish the equivalence between two bounded-variation
solution concepts for the sweeping process. 
On the one hand, we consider a \emph{local} formulation in terms of the
differential measure $dx$ of the trajectory, requiring that its density
(with respect to a suitable reference measure) belongs to the negative
proximal normal cone.
On the other hand, we consider a \emph{global} variational inequality tested
against admissible trajectories.
In the prox-regular setting, this inequality necessarily involves a
quadratic correction term reflecting the hypomonotonicity of proximal normal
cones, while in the convex case the correction is not needed.

A key technical point is the availability of sufficiently rich test
trajectories.
This is ensured by the bounded selection extension property \ref{H3},
which allows one to extend continuous selections defined on large compact
subsets of $[0,T]$ to global continuous selections of $C(\cdot)$ while
preserving uniform bounds.
After introducing admissible trajectories, we recall the standard
differential-measure formulation and then propose an integral formulation
adapted to uniformly prox-regular moving sets.
We also introduce a variational residual $\mathcal E_\nu$ that measures the
defect in the integral inequality and provides a bridge between the two
formulations.
Our main result shows that, under \ref{H1}, \ref{H2}, and \ref{H3}, the two
definitions characterize the same class of trajectories.

\begin{definition} Let $C\colon [0,T]\rightrightarrows \H$ be a set-valued map. We say that a mapping $x\colon [0,T]\to \H$ is an \emph{admissible trajectory for $C$} if $x(\cdot)$ is right-continuous and of bounded variation, with $x(0)=x_0\in C(0)$ and $x(t)\in C(t)$ for all $t\in[0,T]$. 
\end{definition}

We now recall the notion of solution in the sense of differential measures for
the sweeping process, following \cite{MR3571564}, which is the standard
formulation in the literature.
	\begin{definition}\label{def2}
		We say that $x\colon [0,T]\to \H$ is a \emph{solution in the sense of differential measures} of the sweeping process if  
		\begin{enumerate}
			\item[(a)] The mapping $x(\cdot)$ is an admissible trajectory for $C$.
			\item[(b)]  There exists a complete positive Radon measure $\nu$ such that the differential measure $dx$ of $x(\cdot)$ is absolutely continuous with respect to $\nu$, with density $\frac{dx}{\dnu}(\cdot)\in L_{\nu}^1([0,T],\H)$, and  
			$$
			\frac{dx}{\dnu}(t)\in -N^P(C(t);x(t)) \quad \nu\textrm{-a.e. } t\in [0,T].
			$$
		\end{enumerate}
	\end{definition}

Now we introduce the notion of an \emph{integral solution}, following \cite{MR2774131,MR2491851} (see also \cite{MR3369277} for the convex case). To the best of our knowledge, the corresponding definition for prox-regular sets is new.
	\begin{definition}\label{def1}
		We say that $x\colon [0,T]\to \H$ is an \emph{integral solution} of the sweeping process if
		\begin{enumerate}
			\item[(a)] The mapping $x(\cdot)$ is an admissible trajectory for $C$.
			\item[(b)] There exists a complete positive Radon measure $\nu$ such that the differential measure $dx$ of $x(\cdot)$ is absolutely continuous with respect to $\nu$, with density $v(\cdot):=\frac{dx}{\dnu}(\cdot) \in L_{\nu}^1([0,T],\H)$, and such that, for every $y\in \mathcal{C}([0,T],\H)$ satisfying   $y(t)\in C(t)$ for $\nu$-a.e.\ $t\in [0,T]$, one has
			\begin{equation}\label{eqn.ineq.1}
				\int_0^T \left(\langle v(t),y(t)-x(t) \rangle + \frac{\Vert v(t)\Vert }{2\rho}\,\|y(t)-x(t)\|^2 \right)\, d\nu(t)\geq 0.
			\end{equation}
		\end{enumerate}
	\end{definition}

	\begin{lemma}\label{lemma-measurable-fx-mx} 
    Assume that \ref{H1} and \ref{H2} hold.     Let $x(\cdot)$ be a mapping of bounded variation, and assume that its differential measure $dx$ is absolutely continuous with respect to a complete positive Radon measure $\nu$ on $[0,T]$. Define  
    $$
    v(t):=\frac{dx}{d\nu}(t) \,\textrm{ and }\, f_x(t,y):= \delta_{C(t)}(y) + \left\langle v(t),\,y-x(t)\right\rangle + \frac{\Vert v(t)\Vert }{2\rho}\,\|y-x(t)\|^2.
    $$
    Then, for every continuous mapping $y\colon [0,T]\to \H$, the functions 
        $$
        t\mapsto f_x(t,y(t)) \quad \textrm{ and } \quad t\mapsto m_{f_x}(t) := \inf_{y\in \H} f_x(t,y)
        $$
        are $\nu$-measurable.
	\end{lemma}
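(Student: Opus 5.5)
Our plan is to split $f_x(t,y(t))$ into the indicator part and the finite part, and to obtain $m_{f_x}$ as a countable infimum over a dense family of continuous selections.

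\emph{Step 1: measurability of the finite part.} Since $x$ is of bounded variation and right-continuous (as it is throughout the paper), $x$ is Borel measurable; indeed it has at most countably many discontinuities and is a pointwise limit of step functions. The density $v$ is strongly $\nu$-measurable by definition of $L^1_\nu$, and $y$ is continuous. Hence
\[
g(t):=\langle v(t),y(t)-x(t)\rangle+\frac{\|v(t)\|}{2\rho}\|y(t)-x(t)\|^2
\]
is $\nu$-measurable as a continuous combination of strongly measurable functions.

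\emph{Step 2: the indicator part.} We have $\delta_{C(t)}(y(t))=0$ exactly on the set $A:=\{t: d(y(t);C(t))=0\}$, because $C(t)$ is closed. By \ref{H2} and \cite[Proposition 6.1.15(a)]{MR2527754}, the map $(t,z)\mapsto d(z;C(t))$ is upper semicontinuous in $t$ for each fixed $z$, and it is $1$-Lipschitz in $z$. It follows that
\[
t\mapsto d(y(t);C(t))=\lim_n d(y_n(t);C(t))
\]
is Borel, using step approximations $y_n$ of $y$, or directly since the combined map is upper semicontinuous in $t$. Hence $A$ is Borel, and $f_x(t,y(t))=g(t)$ on $A$ and $+\infty$ off $A$, which is $\nu$-measurable.

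\emph{Step 3: measurability of $m_{f_x}$; this is the main obstacle.} Here $\H$ may be nonseparable, so one cannot simply take an infimum over a countable dense subset of $\H$. We would use Michael-type selections instead. By \ref{H1}, \ref{H2}, Proposition \ref{path-connected-prox-reg} and \cite[Theorem 5]{MR2406391}, lower semicontinuous maps with path-connected prox-regular values admit continuous selections. Applying this to truncations $C(t)\cap\mathbb{B}_r[a]$, whose values are prox-regular by Lemma \ref{lemma-prox-inter-ball}, should give a countable family $(y_k)$ of continuous selections. The aim is that $\{y_k(t)\}$ is dense in $C(t)$ for every $t$ (a Castaing representation); we would first try to build it from a countable dense set of centers $a$ in the separable closed span of the ranges involved. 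Since $f_x(t,\cdot)$ restricted to $C(t)$ is continuous, density would give $m_{f_x}(t)=\inf_k f_x(t,y_k(t))$, which is $\nu$-measurable by Steps 1–2. If Castaing density fails in the nonseparable case, the fallback is the following. For $\nu$-a.e.\ $t$, $v$ and $x$ take values in a separable subspace $E$ (Pettis). The minimization of the quadratic form only depends on $C(t)$ through distances to points of $E$, and the infimum can be expressed via $d(\cdot;C(t))$ evaluated on a countable dense subset of $E$ together with the prox-regular projection. So $m_{f_x}$ becomes a countable inf/sup of $\nu$-measurable functions; completeness of $\nu$ handles the exceptional null set.
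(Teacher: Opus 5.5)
Your Steps 1 and 2 are fine. In particular, composing the jointly upper semicontinuous map $(t,z)\mapsto d(z;C(t))$ with the continuous $y$ is a valid, more elementary substitute for the paper's appeal to \cite[Lemma~3.1]{MR4547969}.

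The gap is Step 3, and it has two parts. First, your main plan cannot work in the generality of the paper. $\H$ is allowed to be nonseparable (this is why the paper proves a nonseparable Lusin theorem), so $C(t)$ may itself be nonseparable (e.g.\ $C(t)=\H$). Then no countable family $\{y_k(t)\}$ can be dense in $C(t)$. Even for separable $\H$ the construction is only sketched: $C(t)\cap\mathbb{B}_r[a]$ is empty for some $t$, so you would have to patch on closed subsets of $\{t: C(t)\cap\mathbb{B}_r(a)\neq\emptyset\}$ before invoking \cite[Theorem~5]{MR2406391}. Second, your fallback is not yet an argument. You assert that the infimum depends on $C(t)$ only through distances to points of $E$, and that it is a countable inf/sup ``together with the prox-regular projection'', but you do not derive either claim. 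The projection is also irrelevant here: the point that matters can lie at distance $\geq\rho$ from $C(t)$, especially since the lemma does not assume $x(t)\in C(t)$.

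The missing idea is the explicit completing-the-square identity, which the paper itself uses in Claims~1--2 of Theorem~\ref{thm_main}. For $v(t)\neq 0$ and $a(t):=x(t)-\rho\, v(t)/\|v(t)\|$,
\[
\langle v(t),y-x(t)\rangle+\frac{\|v(t)\|}{2\rho}\|y-x(t)\|^2=\frac{\|v(t)\|}{2\rho}\bigl(\|y-a(t)\|^2-\rho^2\bigr),
\qquad\text{so}\qquad
m_{f_x}(t)=\frac{\|v(t)\|}{2\rho}\bigl(d(a(t);C(t))^2-\rho^2\bigr).
\]
On $\{v=0\}$ we have $m_{f_x}(t)=0$, because $C(t)\neq\emptyset$. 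Now $a$ is strongly $\nu$-measurable on $\{v\neq 0\}$, so $\nu$-a.e.\ it takes values in a separable subspace $E$ with a countable dense subset $D$. Then, $\nu$-a.e.,
\[
d(a(t);C(t))=\inf_{e\in D}\bigl[d(e;C(t))+\|e-a(t)\|\bigr].
\]
Each function in this countable infimum is $\nu$-measurable, by \ref{H2} (upper semicontinuity of $t\mapsto d(e;C(t))$), and completeness of $\nu$ absorbs the null set. This closes your argument using only \ref{H2} and nonempty closed values, with no selections at all.

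The paper argues differently. Lusin's theorem gives compacts $K_n$ on which $x$ and $v$ are continuous. For $\bar t\in K_n$ with $m_{f_x}(\bar t)<\alpha$, the paper uses Lemma~\ref{patching-lemma}, Proposition~\ref{path-connected-prox-reg} and \cite[Theorem~5]{MR2406391} to build a single continuous selection of $C$ through a near-minimizer at $\bar t$. This shows $\{m_{f_x}<\alpha\}\cap K_n$ is relatively open. Because it needs only one selection through a prescribed point, not a dense countable family, nonseparability is harmless there, at the cost of invoking \ref{H1}.
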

	\begin{proof} 
    It remains to prove the measurability of the map $t\mapsto \delta_{C(t)}(y(t))$.  Set 
    $$
    \psi(t,z):=\delta_{C(t)}(z) \textrm{ for  } (t,z)\in [0,T]\times \H.
    $$
    For every $t\in [0,T]$, the function $\psi(t,\cdot)$ is lower semicontinuous and $\operatorname{epi}\psi_t=C(t)\times [0,+\infty[$, where $\psi_t:=\psi(t,\cdot)$. Since $C$ is lower semicontinuous, the set-valued mapping $t\mapsto \operatorname{epi} \psi_t$ is lower semicontinuous. hence, by \cite[Lemma 3.1]{MR4547969}, the map $t\mapsto \delta_{C(t)}(y(t))$ is measurable. Therefore, $t\mapsto f_x(t,y(t))$ is $\nu$-measurable.  Next, define 
    $$
    g\colon (t,z)\mapsto \left\langle v(t),z-x(t)\right\rangle + \frac{\Vert v(t)\Vert }{2\rho}\,\|z-x(t)\|^2.
    $$
 By Lusin's Theorem, there exists a sequence of compact sets $(K_n)$ such that $\nu([0,T]\setminus K_n)<\frac{1}{n}$ and both $v(t)|_{K_n}$ and $x|_{K_n}$ are continuous. Fix $\alpha\in\R$, $n\in \mathbb{N}$,  and $\bar{t}\in \left\{t\in K_n : m_{f_x}(t)<\alpha\right\}$. Then there exists $\bar y\in C(\bar t)$ such that $g(\bar t,\bar y)<\alpha$. Define $\mathscr{C}(t) = C(t)$ for all $t\neq \bar t$ and $\mathscr{C}(\bar t) = \{\bar y\}$. This set-valued mapping is lower semicontinuous (by Lemma \ref{patching-lemma}), takes $\rho$-uniformly prox-regular values and path-connected values (by Proposition \ref{path-connected-prox-reg}), and therefore admits a continuous selection $y\colon [0,T]\to \H$ by \cite[Theorem 5]{MR2406391}. Since $t\mapsto g(t,y(t))$ is continuous on $K_n$, there exists an open neighborhood $U$ of $\bar t$ such that $g(t,y(t))<\alpha$ for all $t\in U\cap K_n$. As $y(t)\in C(t)$ for all $t\in [0,T]$, we have $m_{f_x}(t)\leq f_x(t,y(t))=g(t,y(t))<\alpha$ on $U\cap K_n$. Thus $\left\{t\in K_n : m_{f_x}(t)<\alpha\right\}$ is relatively open in $K_n$, hence measurable. It follows that $\left\{t\in \bigcup_{n\in\N} K_n : m_{f_x}(t)<\alpha\right\}$ is measurable. Since  $[0,T]\setminus \bigcup_{n\in\N}K_n$ is $\nu$-null and $\nu$ is complete, $m_f$ is a $\nu$-measurable.
\end{proof}

Let $x(\cdot)$ be a mapping of bounded variation, and assume that its differential measure $dx$ is absolutely continuous with respect to a complete positive Radon measure $\nu$ on $[0,T]$. We define the \emph{variational residual} by
\begin{equation}\label{eq:def_E_nu}
    \mathcal{E}_{\nu}(x)=\inf_{y\in \mathcal{A}_{\nu}}\int_0^T \left[ \left\langle v(t),\,y(t)-x(t)\right\rangle + \frac{\Vert v(t)\Vert }{2\rho}\,\|y(t)-x(t)\|^2\right] \,d\nu(t),
\end{equation}
where $v:=\frac{dx}{d\nu}$ and \(\mathcal{A}_{\nu}\) denotes the class of \emph{admissible test trajectories},
\[
\mathcal{A}_{\nu}:=\Bigl\{\,y\in \mathcal{C}([0,T];\H): y(t)\in C(t)\ \textrm{ for }\nu\textrm{-a.e.\ }t\in[0,T]\,\Bigr\}.
\]
The variational residual $\mathcal E_\nu$ is designed so that integral solutions satisfy $\mathcal E_\nu(x)\ge 0$, since the integral in \eqref{eq:def_E_nu} is then nonnegative for every admissible test trajectory. 
In order to relate the condition $\mathcal E_\nu(x)=0$ to the integral formulation, one also needs a converse normalization: for any admissible trajectory $x$, the class $\mathcal A_{\nu}$ should be nonempty and contain test trajectories along which the value of the integral in \eqref{eq:def_E_nu} can be made arbitrarily small. 
The next proposition establishes precisely these two facts. Its proof uses \ref{H3} to extend continuous selections defined on large compact subsets of $[0,T]$ to global continuous selections of $C$.

\begin{proposition}\label{A-negative}
Assume that \ref{H1}, \ref{H2}, and \ref{H3} hold. Let $x(\cdot)$ be an admissible trajectory for $C$, and assume that the differential measure $dx$ is absolutely continuous with respect to a complete positive Radon measure $\nu$ on $[0,T]$. 
Then $\mathcal A_{\nu}\neq\emptyset$ and $\mathcal{E}_{\nu}(x)\leq 0$.
\end{proposition}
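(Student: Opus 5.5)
The plan is to approximate $x$ itself by continuous selections of $C$. The integrand in \eqref{eq:def_E_nu} vanishes when $y=x$, so the integral is small as soon as $y$ is uniformly bounded and coincides with $x$ off a set of small $\nu$-measure. Nonemptiness of $\mathcal A_\nu$ follows from the same construction, or directly from \ref{H3} with $K=\emptyset$ (equivalently, from \cite[Theorem~5]{MR2406391} applied to $C$, whose values are path-connected by Proposition~\ref{path-connected-prox-reg}).

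Fix $\varepsilon>0$. Since $x$ is of bounded variation, it is bounded; set $M:=\sup_{t}\|x(t)\|$. The map $x$ is also strongly $\nu$-measurable, being right-continuous with countably many discontinuities. Apply the Lusin-type proposition to $x$ and obtain compact sets $K_n\subset[0,T]$ with $\nu([0,T]\setminus K_n)<1/n$ and $x|_{K_n}$ continuous. Then $y:=x|_{K_n}$ is a continuous selection of $C$ on $K_n$ with $\sup_{K_n}\|y-x\|=0\le\eta$ for any fixed $\eta>0$, say $\eta=1$. By \ref{H3} there is a continuous selection $y_n\colon[0,T]\to\H$ of $C$ with $y_n=x$ on $K_n$ and $\sup_t\|y_n(t)\|\le R:=R_{x,1}$. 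The key point is that $R$ does not depend on $n$; this is exactly the uniformity in \ref{H3}, and it is what fails in Example~\ref{ex:H3-fails}. In particular $y_n\in\mathcal A_\nu$, so $\mathcal A_\nu\neq\emptyset$.

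For the estimate, the integrand vanishes on $K_n$, so
\[
\int_0^T\Bigl[\langle v,y_n-x\rangle+\tfrac{\|v\|}{2\rho}\|y_n-x\|^2\Bigr]d\nu
\le \int_{[0,T]\setminus K_n}\Bigl((R+M)+\tfrac{(R+M)^2}{2\rho}\Bigr)\|v(t)\|\,d\nu(t).
\]
Since $v\in L^1_\nu$, absolute continuity of the integral shows that the right-hand side tends to $0$ as $n\to\infty$, because $\nu([0,T]\setminus K_n)\to0$. Hence $\mathcal E_\nu(x)\le 0$. The integrals are well defined: the integrands are $\nu$-measurable by Lemma~\ref{lemma-measurable-fx-mx}, or directly since $y_n$ is continuous and $v,x$ are measurable, and they are dominated by an $L^1_\nu$ function.

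The main obstacle is obtaining a bound on the extensions $y_n$ that is uniform in $n$; this is supplied entirely by \ref{H3}. The only other delicate point is the strong measurability of $x$ needed to invoke Lusin's theorem, which holds because a BV right-continuous function is a uniform limit of step functions. If needed, I would instead apply Lusin's theorem jointly to $(x,v)$, as in the proof of Lemma~\ref{lemma-measurable-fx-mx}.
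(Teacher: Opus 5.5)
Your proof is correct and follows essentially the same route as the paper. Both arguments use Lusin's theorem to get compact sets $K_n$ with $\nu([0,T]\setminus K_n)\to 0$ on which $x$ is continuous, apply \ref{H3} with a fixed $\eta$ to extend $x|_{K_n}$ to continuous selections bounded by an $n$-independent constant, and conclude from the tail estimate that $\int_{[0,T]\setminus K_n}\|v\|\,d\nu\to 0$. Your justification that $x$ is strongly measurable (right-continuous and of bounded variation, hence a uniform limit of step functions) fills in a detail the paper leaves implicit.
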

\begin{proof} Let us denote $v:=\frac{dx}{d\nu}\in L^1_\nu([0,T],\H)$. Since $C$ is lower semicontinuous and takes uniformly prox-regular and connected values, Proposition \ref{path-connected-prox-reg} implies that $C$ takes path-connected values. Hence, by \cite[Theorem 5]{MR2406391}, $C$ admits a continuous selection, and therefore  $\mathcal{A}_{\nu}\neq\emptyset$. Fix $\eta>0$. For each $n\in \N$, by Lusin's theorem there exists a compact set  $K_n\subset [0,T]$ such that $\nu([0,T]\setminus K_n)\leq\frac{1}{n}$ and the restriction $x|_{K_n}\colon  K_n\to \H$ is continuous. By \ref{H3}, there exists $R_{x,\eta}>0$ and a continuous selection $y_n\colon [0,T]\to \H$ of $C$ such that $y_n = x$ on $K_n$ and $\sup_{t\in [0,T]}\|y_n(t)\|\leq R_{x,\eta}$. It follows that, for all $n\in \N$,
\begin{equation*}
    \begin{aligned}
    \mathcal{E}_{\nu}(x) &\leq\int_0^T \left[ \left\langle v(t),\,y_n(t)-x(t)\right\rangle + \frac{\Vert v(t)\Vert }{2\rho}\,\|y_n(t)-x(t)\|^2\right] \,d\nu(t)\\
    &= \int_{[0,T]\setminus K_n} \left[ \left\langle v(t),\,y_n(t)-x(t)\right\rangle + \frac{\Vert v(t)\Vert }{2\rho}\,\|y_n(t)-x(t)\|^2\right] \,d\nu(t)\\
    &\leq \left(1+\frac{1}{2\rho}\right)(R_{x,\eta} + \mathscr{C} + (R_{x,\eta} + \mathscr{C})^2)\int_{[0,T]\setminus K_n} \|v(t)\|\,d\nu(t),
    \end{aligned}
\end{equation*}
where $\mathscr{C} := \sup_{t\in [0,T]}\|x(t)\|$.  Since $\nu([0,T]\setminus K_n)\to 0$ and $v\in L^1_\nu([0,T];\H)$, we have $$\lim_{n\to \infty}\int_{[0,T]\setminus K_n} \|v(t)\|d\nu(t)= 0.$$ Therefore $\mathcal{E}_{\nu}(x)\leq 0$.
\end{proof}

The next result establishes the equivalence between Definitions~\ref{def2} and~\ref{def1}, which was initially proved for set-valued maps with nonempty, closed, convex, and bounded values in \cite[Theorem~9.3]{MR4547969}.
	\begin{theorem}\label{thm_main}
		Assume \ref{H1}, \ref{H2} and \ref{H3} hold. Let $x\colon [0,T]\to \H$ be a mapping.  Then $x(\cdot)$ is an integral solution of sweeping process if and only if $x(\cdot)$ is a solution in the sense of differential measures of the sweeping process.
	\end{theorem}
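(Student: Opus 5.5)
We prove the two implications separately, with the same measure $\nu$ and density $v:=\frac{dx}{d\nu}$ in both definitions.

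\emph{Differential measures $\Rightarrow$ integral solution.} Let $x$ be a solution in the sense of Definition~\ref{def2} with measure $\nu$. Fix $y\in\mathcal C([0,T];\H)$ with $y(t)\in C(t)$ for $\nu$-a.e.\ $t$. For $\nu$-a.e.\ $t$ we have both $-v(t)\in N^P(C(t);x(t))$ and $y(t)\in C(t)$. The $\rho$-uniform prox-regularity of $C(t)$ then gives
\[
\langle -v(t),y(t)-x(t)\rangle\le \frac{\|v(t)\|}{2\rho}\|y(t)-x(t)\|^2 .
\]
Equivalently, the integrand in \eqref{eqn.ineq.1} is nonnegative $\nu$-a.e. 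It is $\nu$-measurable by Lemma~\ref{lemma-measurable-fx-mx}. It is also integrable, because $x$ is bounded (being BV), $y$ is continuous, and $v\in L^1_\nu$. Integrating yields \eqref{eqn.ineq.1}.

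\emph{Integral solution $\Rightarrow$ differential measures.} This is the main direction. We argue by contradiction through a localization. Suppose the set
\[
B:=\{t: -v(t)\notin N^P(C(t);x(t))\}
\]
has positive $\nu$-measure. (Its measurability should follow from an argument like that of Lemma~\ref{lemma-measurable-fx-mx}, or one works with outer measure and completeness.) For $t\in B$, the definition of uniform prox-regularity fails at $x(t)$. Indeed, by the standard characterization, $\zeta\in N^P(C(t);x(t))$ if and only if $\langle \zeta,z-x(t)\rangle\le \frac{\|\zeta\|}{2\rho}\|z-x(t)\|^2$ for all $z\in C(t)$. Hence, for $t\in B$,
\[
m(t):=\inf_{z\in C(t)}\Bigl(\langle v(t),z-x(t)\rangle+\tfrac{\|v(t)\|}{2\rho}\|z-x(t)\|^2\Bigr)<0 .
\]
We will need this converse inclusion; it can be obtained from the quadratic-inequality definition of $\partial_P\delta_{C(t)}$ with $\sigma=\|\zeta\|/(2\rho)$. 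By Lemma~\ref{lemma-measurable-fx-mx}, $m=m_{f_x}$ is $\nu$-measurable. Thus there exist $\alpha>0$ and $B'\subset B$ with $\nu(B')>0$ on which $m<-\alpha$. By Lusin's theorem we may shrink to a compact $K\subset B'$ with $\nu(K)>0$ on which $v$ and $x$ are continuous. We may further ensure, by restricting $z$ to a bounded set, that the near-minimizers lie within distance $\eta$ of $x(t)$: the infimum over $C(t)\cap\mathbb B_r[x(t)]$ stays negative for some $r$ on a set of positive measure, by monotone convergence in $r$.

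Next we build the test trajectory. Using the Lusin-type patching in the proof of Lemma~\ref{lemma-measurable-fx-mx}, together with Lemma~\ref{patching-lemma} and \cite[Theorem 5]{MR2406391}, we obtain a continuous selection on a compact $K'\subset K$ of positive measure along which the integrand is $<-\alpha/2$. The idea is the following. Around each $\bar t\in K$ we pick $\bar z$ with $g(\bar t,\bar z)<-\alpha$ and a continuous selection through $\bar z$, which keeps $g<-\alpha/2$ on a relative neighborhood in $K$. Compactness gives finitely many neighborhoods, and we take a compact piece of one of them with positive measure. Call the resulting selection $y$ on $K'$, with $\|y-x\|\le\eta$ there.

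We then glue to $x$ off $K'$. Choose compacts $L_n\subset[0,T]\setminus K'$ such that $x|_{L_n}$ is continuous and $\nu([0,T]\setminus(K'\cup L_n))\to0$. Apply \ref{H3} on the compact set $K'\cup L_n$ (which splits into two disjoint closed pieces) to the selection equal to $y$ on $K'$ and to $x$ on $L_n$. This yields $y_n\in\mathcal A_\nu$ with $\|y_n\|_\infty\le R_{x,\eta}$. The integral of $y_n$ splits as follows: it is $\le -\frac{\alpha}{2}\nu(K')$ on $K'$, it vanishes on $L_n$, and on the remainder it is bounded by $c\int_{[0,T]\setminus(K'\cup L_n)}\|v\|\,d\nu\to0$, exactly as in Proposition~\ref{A-negative}. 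For large $n$ the integral is therefore negative, contradicting \eqref{eqn.ineq.1}.

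The main obstacle is the construction of the local continuous selection $y$ on a set of positive measure with a uniform negative integrand. This requires combining Lusin continuity of $v$ and $x$ with lower semicontinuity of $C$. A secondary point is the measurability of $B$. I would first try to handle it by noting that $B=\{m_{f_x}<0\}$ once the equivalence $-v(t)\in N^P(C(t);x(t))$ if and only if $m_{f_x}(t)\ge0$ is established pointwise.
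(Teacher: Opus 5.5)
Your proof is correct. The forward direction is the same pointwise argument the paper calls straightforward. Your identification $B=\{m_{f_x}<0\}$ is exactly the pointwise equivalence the paper uses in its last line.

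For the converse, however, you take a genuinely different and more economical route. The paper argues directly that $\int_0^T m_{f_x}\,d\nu\ge 0$. It introduces the sublevel maps $\mathsf M_n(t)=\{y:\ f_x(t,y)\le\beta_n(t)\}$, with $\beta_n=(1-\frac1n)m_{f_x}$ on $A_2$, and identifies them on $A_1$ as $C(t)\cap\B_{r(t)}[a(t)]$. To build near-optimal continuous selections on sets of almost full measure, which \ref{H3} then extends, it needs three tools:
\begin{itemize}
\item Lemma~\ref{lemma-prox-inter-ball}, for prox-regularity of these intersections and the closure identity;
\item the lower semicontinuity of $t\mapsto C(t)\cap\B_{r(t)}(a(t))$ on a Lusin compact \cite{MR755330};
\item a Michael-type selection theorem for prox-regular values \cite[Corollary 15.137]{MR4659163}.
\end{itemize}

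Arguing by contradiction, you only need \emph{one} compact $K'$ of positive measure carrying a continuous selection whose integrand is below $-\alpha/2$. For that, three ingredients suffice: the single-point selection device from the proof of Lemma~\ref{lemma-measurable-fx-mx} (Lemma~\ref{patching-lemma}, Proposition~\ref{path-connected-prox-reg} and \cite[Theorem~5]{MR2406391}), continuity of $x$ and $v$ on a Lusin set, and a finite subcover. Gluing with $x$ on $L_n$ before applying \ref{H3} then forces the integral to be negative. You therefore dispense with Lemma~\ref{lemma-prox-inter-ball} and with both external semicontinuity and selection results.

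What the paper's construction buys in exchange is quantitative information. Read without the final appeal to \eqref{inf-cont-pr-reg}, its Claims 2--4 show that $\mathcal E_\nu(x)=\int_0^T m_{f_x}\,d\nu$ for every admissible $x$ with $dx\ll\nu$, an interchange of infimum and integral. Your argument yields only the qualitative implication $\nu(\{m_{f_x}<0\})>0\Rightarrow\mathcal E_\nu(x)<0$, which is all the theorem requires.

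Finally, your step restricting $z$ to a bounded set can simply be dropped; it would also leave you the measurability of the restricted infimum to check. By completing the square, any $z\in C(t)$ with negative integrand already satisfies $\|z-x(t)\|<2\rho$. In any case $\eta:=\max_{K'}\|y-x\|$ is fixed before $n$ is chosen, so $R_{x,\eta}$ does not depend on $n$.
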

	\begin{proof}
    First of all, if $x$ is a solution in the sense of differential measures, it is straightforward to conclude that it is also an integral solution.\\
    Conversely, suppose that $x$ is an integral solution and denote $v(t):=\frac{dx}{d\nu}(t)$. Consider the functions $f_x$ and $m_{f_x}$ defined previously in Lemma \ref{lemma-measurable-fx-mx}. Observe that \eqref{eqn.ineq.1} can be rewritten as
		\begin{equation}\label{inf-cont-pr-reg}
			\inf_{y\in \mathcal{C}([0,T];\H)}\int_0^T f_x(t,y(t))\,d\nu(t)\geq 0. 
		\end{equation}
		Therefore, $\mathcal{E}_{\nu}(x)\geq 0$ and by Proposition \ref{A-negative} we have $\mathcal{E}_{\nu}(x) = 0$. Our goal is to prove that $\nu$-a.e. $m_{f_x}(t) = 0$. Since $x(\cdot)$ is admissible,  we have that $m_{f_x}(t)\leq 0$ for all $t\in [0,T]$.\\
        \emph{{\bf Claim 1:} The function $m_f$ is integrable.}\\
		\emph{Proof of Claim 1:} Indeed, if $v(t) = 0$, then $m_f(t) = 0$. Assume that $v(t) \neq 0$, then for $y\in C(t)$
        \begin{equation*}
            \begin{aligned}
                \left\langle v(t),\,y-x(t)\right\rangle + \frac{\Vert v(t)\Vert }{2\rho}\|y-x(t)\|^2
                &=  \frac{\Vert v(t)\Vert}{2\rho}(\|y-x(t)+\rho \mathsf{w}\|^2-\rho^2)\\
                &\geq -\frac{\rho}{2}\Vert v(t)\Vert,
            \end{aligned}
        \end{equation*}
        where $\mathsf{w} := \frac{v(t)}{\|v(t)\|}$. Therefore, for all $t\in [0,T]$, one has  $-\tfrac{\rho}{2}\left\|v(t)\right\|\leq m_f(t)\leq 0$,  which means that $m_f\in L^1_{\nu}([0,T])$, concluding the claim. \qed \\
		Define the sets
        $$
        A_1 := \{t\in [0,T] : v(t)\neq 0\}  \textrm{ and } A_2 = \{t\in A_1 : m_{f_x}(t)<0\}.
        $$
      Consider $n\in \N$, and the following function $\beta_n := (1-\frac{1}{n})m_{f_x}$ on $A_2$ and $\beta_n = 0$ on $[0,T]\setminus A_2$. Note that $m_{f_x}<\beta_n<0$ on $A_2$ and $$\int_0^T \beta_n(t)d\nu(t)\leq \int_0^Tm_{f_x}(t)d\nu(t) + \frac{\|m_{f_x}\|_1}{n}.$$
		By Lusin's Theorem, for every $n\in \N$, we can find a compact set  $K_n\subset [0,T]$ such that $\beta_n|_{K_n}$, $x|_{K_n}$ and $v|_{K_n}$ are continuous where $\nu([0,T]\setminus K_n)<\frac{1}{n}$. By regularity of $\nu$, we can take compacts set $K_n^1\subset K_n\setminus A_2$ and $K_n^2\subset A_2\cap K_n$ such that $\nu(K_n\setminus A_2\setminus K_n^1)<\frac{1}{n}$ and $\nu(A_2\cap K_n\setminus K_n^2)<\frac{1}{n}$.
		We consider the set-valued mapping $t \rightrightarrows  \mathsf{M}_n(t)$ given by
		\begin{equation}\label{eqn.Mn}
			\mathsf{M}_n(t) = \{y\in \H : f_x(t,y)\leq \beta_n(t)\}.
		\end{equation}
        Note that $\mathsf{M}_n(t)\neq \emptyset$ for all $t\in [0,T]$ by definition of $\beta_n$.\\
		\emph{{\bf Claim 2:} We have 
			\[
			\mathsf{M}_n(t) =
			\begin{cases} 
				C(t) & \textrm{if } t\in [0,T]\setminus A_1, \\
				C(t)\cap \mathbb{B}_{r(t)}[a(t)] & \textrm{if } t\in A_1.
			\end{cases}
			\]
			where $a(t) = x(t) - \rho\frac{ v (t)}{\| v (t)\|}$ and $r(t) = \sqrt{\rho^2 + 2\rho\frac{\beta_n(t)}{\| v (t)\|}}$. Moreover, for all $t\in A_2$, $C(t)\cap \mathbb{B}_{r(t)}(a(t))\neq \emptyset$.}\\
		\emph{Proof of Claim 2:} If $t\in [0,T]\setminus A_1$ we have $ v (t) = 0$. We also note that $\beta_n = 0$ on $[0,T]\setminus A_2$ and since $A_2\subset A_1$, it follows $\beta_n(t) = 0$. Then
		\begin{equation*}
			\{y\in \H : f_x(t,y)\leq \beta_n(t)\} = \{y\in\H : \delta_{C(t)}(y)\leq 0\} = C(t).
		\end{equation*}
		If $t\in A_1$, we have $ v (t)\neq 0$. Pick $y\in \mathsf{M}_n(t)$, it equivales to $y\in C(t)$ (since $\beta_n(t)<\infty$) and 
		\begin{equation}\label{eqn-ball3432}
			2\left\langle \rho\mathsf{w},y-x(t)\right\rangle + \|y-x(t)\|^2\leq \frac{2\rho\beta_n(t)}{\| v (t)\|}
		\end{equation}
		where $\mathsf{w} := \frac{ v (t)}{\| v (t)\|}$, and finally observe that \eqref{eqn-ball3432} can be written as $\|y-x(t) + \rho \mathsf{w}\|\leq \sqrt{\frac{2\rho\beta_n(t)}{\| v (t)\|} + \rho^2}$, and we are done. Finally, by definition of $\beta_n$, for all $t\in A_2$ there is $y\in \H$ such that $f_x(t,y)<\beta_n(t)$ and as before this inequality is equivalent to $y\in C(t)\cap \mathbb{B}_{r(t)}(a(t))$ and we have proved the claim.\\
		\emph{{\bf Claim 3:} $\mathsf{M}_n$ has a continuous selection on $\mathcal{K}_n := K_n^1\cup K_n^2$.}\\
		\emph{Proof of Claim 3:} Note that for $t\in A_2$, $r(t)<\rho$. From Lemma \ref{lemma-prox-inter-ball}, we have $\mathsf{M}_n(t)$ is $\rho$-uniformly prox-regular and nonempty for all $t\in A_2\cup [0,T]\setminus A_1$. On $K_n^1$, we have a continuous selection of $\mathsf{M}_n$ given by $t\mapsto x(t)$. So, now we are looking for a continuous selection on $K_n^2\subset A_2\cap K_n$. Define $\mathscr{M}_n\colon t\in K_n^2 \rightrightarrows  C(t)\cap \mathbb{B}_{r(t)}(a(t))$ and note that it satisfies the hypothesis of \cite[Proposition 5, p. 44]{MR755330}, then it is lsc. By Lemma \ref{lemma-prox-inter-ball} we have for all $t\in K_n^2$, $\mathsf{M}_n(t) = \cl{\mathscr{M}_n(t)}$, then it is still lsc and by using \cite[Corollary 15.137]{MR4659163}, we obtain a continuous selection of $\mathsf{M}_n$ on $K_n^2$ denoted by $\tilde{x}_n\colon K_n^2\to \H$. Then, we have that $y_n(t) = x\mathds{1}_{K_n^1} + \tilde{x}_n\mathds{1}_{K_n^2}$ is continuous on $\mathcal{K}_n$ and we are done.\\
		\emph{{\bf Claim 4:} Theorem \ref{thm_main} holds.}\\
		\emph{Proof of Claim 4:} Note that $\sup_{t\in \mathcal{K}_n}\|y_n(t)-x(t)\|\leq 2\rho$. By using \ref{H3} with the compact $\mathcal{K}_n$ and $\eta:=2\rho$, there is a continuous selection $\bar y_n\colon [0,T]\to \H$ of $C$ such that $\bar{y}_n= y_n$ on $\mathcal{K}_n$ and $\sup_{t\in [0,T]}\|\bar y_n(t)\|\leq R_{x,\eta}$. 
		By \eqref{eqn.Mn}, we have $$\int_{\mathcal{K}_n}f_x(t,y_n(t))d\nu(t)\leq \int_{\mathcal{K}_n}\beta_n(t)d\nu(t)\leq \int_{\mathcal{K}_n} m_{f_x}(t)d\nu(t) + \frac{\|m_{f_x}\|_1}{n}.$$
		Observe that
		\begin{align*}
			\int_{\mathcal{K}_n} f_x(t,y_n(t))d\nu(t) =& \int_0^T f_x(t,\bar{y}_n(t))d\nu(t)-\int_{[0,T]\setminus\mathcal{K}_n}f_x(t,\bar{y}_n(t))d\nu(t) \\
            &+ \int_{\mathcal{K}_n} f_x(t,y_n(t)) - f_x(t,\bar{y}_n(t))d\nu(t)\\
			\geq&-\int_{[0,T]\setminus\mathcal{K}_n}f_x(t,\bar{y}_n(t))d\nu(t),
		\end{align*}
		where in the third line we have used \eqref{inf-cont-pr-reg}. Define $\mathscr{C} := \sup_{t\in [0,T]}\|x(t)\|$. Note that
		\begin{align*}
			&\left|\int_{[0,T]\setminus \mathcal{K}_n} f_x(t,\bar{y}_n(t))d\nu(t) \right|\\ \leq &\int_{[0,T]\setminus \mathcal{K}_n} \left|\left\langle  v (t),\bar{y}_n(t)-x(t)\right\rangle\right|d\nu(t)+ \int_{[0,T]\setminus \mathcal{K}_n}\frac{\|v(t)\|}{2\rho}\|\bar{y}_n(t) - x(t)\|^2d\nu(t)\\
			\leq & \left(1+\frac{1}{2\rho}\right)(R_{x,\eta}+\mathscr{C} + (R_{x,\eta}+\mathscr{C})^2)\int_{[0,T]\setminus \mathcal{K}_n} \left\| v (t)\right\|d\nu(t).
		\end{align*}
		We also know that $\nu([0,T]\setminus \mathcal{K}_n)<\frac{3}{n}$. It follows that
		\begin{align*}
			&\frac{\|m_{f_x}\|_1}{n} + \int_{\mathcal{K}_n}m_{f_x}(t)d\nu(t) \\\geq & -\left(1+\frac{1}{2\rho}\right)(R_{x,\eta}+\mathscr{C} + (R_{x,\eta}+\mathscr{C})^2)\int_{[0,T]\setminus \mathcal{K}_n} \left\| v (t)\right\|d\nu(t).
		\end{align*}
		By using that $m_{f_x}(\cdot)$ and $ v(\cdot)$ are integrable, taking $n\to \infty$ in the last inequality, we have $\int_0^T m_{f_x}(t)d\nu(t)\geq 0$, and since $m_{f_x}\leq 0$ $\nu$-a.e., we conclude $m_{f_x}\geq 0$ $\nu$-a.e. which implies that $\nu$-a.e. $$\forall y\in C(t) : \left\langle - v (t),y-x(t) \right\rangle\leq \frac{\| v (t)\|}{2\rho}\|y-x(t)\|^2,$$ it follows that $- v (t)\in N^P(C(t);x(t))$ $\nu$-a.e. which means that $x$ is a solution  in the sense of differential measures. 
	\end{proof}

The following remark discusses the sharpness of assumption \ref{H3} in Theorem~(\ref{thm_main}).
\begin{remark}
Example~\ref{ex:H3-fails} shows that \ref{H3} is a quantitative, not a
qualitative, assumption. Two observations put its role in perspective.
First, an inspection of the proofs of Proposition~\ref{A-negative} and of
Claim~4 in Theorem~\ref{thm_main} shows that the bound $R_{x,\eta}$ is
used only through the tail products
\[
\Bigl(1+\dfrac{1}{2\rho}\Bigr)
\bigl(R_{x,\eta}+\mathscr C+(R_{x,\eta}+\mathscr C)^2\bigr)
\int_{[0,T]\setminus \mathcal K_n}\|v(t)\|\,d\nu(t).
\]
Consequently, both results remain valid if \ref{H3} is replaced by the
weaker, trajectory-dependent condition: for every admissible trajectory
$x$ with $dx=v\,d\nu$ and every sequence of compact sets
$K_n\subset[0,T]$ with $\nu([0,T]\setminus K_n)\to0$ on which the
relevant partial selections are defined, there exist continuous
extensions $\bar y_n$ such that
\(
\bigl(1+\|\bar y_n\|_\infty\bigr)^2
\int_{[0,T]\setminus K_n}\|v\|\,d\nu\to0.
\)
In Example~\ref{ex:H3-fails}, this permits extension bounds growing like
$1/\delta_n$, provided the tail of $\|v\|\,d\nu$ near $t=\tfrac12$ decays
sufficiently fast. Second, whether the equivalence of
Theorem~\ref{thm_main} can actually fail in the absence of \ref{H3}
appears to be a delicate open question. Indeed, completing the square in
the integrand gives, for every $y\in C(t)$,
\[
\langle v(t),y-x(t)\rangle+\frac{\|v(t)\|}{2\rho}\|y-x(t)\|^2
\ \ge\ \|v(t)\|\,\|y-x(t)\|\left(\frac{\|y-x(t)\|}{2\rho}-1\right),
\]
so that any competitor producing a negative value lies in the open
$2\rho$-tube around $x(t)$, while excursions of a test trajectory outside
this tube contribute nonnegatively wherever $\|v\|\,d\nu$ carries mass.
A counterexample would therefore require pointwise violations detectable
only through long excursions performed during time windows charged by
$\|v\|\,d\nu$, a mechanism that uniform prox-regularity itself tends to
preclude. We leave the sharpness of \ref{H3} as an open problem.
\end{remark}

\section{Br\'ezis-Ekeland-Nayroles-Type Principle for Sweeping Processes}\label{section5}

The Br\'ezis-Ekeland variational principle and Nayroles' minimum theorem yield a global variational characterization for a wide class of evolution problems generated by convex potentials or maximal monotone operators: one assigns to each trajectory a nonnegative functional whose minimum value is \(0\), and solutions are exactly the trajectories attaining this minimum \cite{MR637215,MR637214,MR418609}. This approach is well suited to existence by direct methods and to stability and approximation analyses, including time-discretization \cite{MR2425653,MR2531193,MR2489147}.

In this section we establish an analogous characterization for sweeping processes driven by uniformly prox-regular moving sets. We introduce a prox-regular variational residual and prove that, under our standing assumptions, its vanishing is equivalent to both the integral and the differential-measure formulations. This yields a global variational criterion for solutions in a nonconvex geometric setting and highlights the connection between catching-up compactness arguments and variational techniques.
\begin{theorem}\label{thm_BEN}
Assume that \ref{H1}, \ref{H2}, and \ref{H3} hold. Let $x\colon [0,T]\to \H$ be an admissible trajectory for $C$.  Assume that the differential measure $dx$ is absolutely continuous with respect to a complete positive Radon measure $\nu$ on $[0,T]$. Let $\mathcal E_\nu(x)$ denote the variational residual defined in \eqref{eq:def_E_nu}.  Then the following assertions are equivalent:
\begin{enumerate}
\item[(i)] The trajectory $x$ is an integral solution of the sweeping process.
\item[(ii)] The trajectory $x$ is a solution in the sense of differential measures of the sweeping process.
\item[(iii)] The differential measure $dx$ is absolutely continuous with respect to a complete positive Radon measure $\nu$ on $[0,T]$ and $\mathcal{E}_{\nu}(x)=0$.
\end{enumerate}
\end{theorem}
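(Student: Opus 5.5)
The equivalence (i)$\Leftrightarrow$(ii) is exactly Theorem~\ref{thm_main}, so the work is to insert (iii) into the cycle. One point must be fixed first: Definitions~\ref{def1} and~\ref{def2} each let the measure be chosen, while (iii) uses the given $\nu$. For (i)$\Rightarrow$(iii) we therefore need the inequality \eqref{eqn.ineq.1} to hold for this particular $\nu$. I would get this from (ii) by a change of reference measure. Let $\mu$ be the measure from Definition~\ref{def2} and set $\lambda:=\mu+\nu$. Then $\mu,\nu\ll\lambda$ and
\[
\frac{dx}{d\lambda}=\frac{dx}{d\mu}\frac{d\mu}{d\lambda}=\frac{dx}{d\nu}\frac{d\nu}{d\lambda}\quad\lambda\text{-a.e.}
\]
Since $N^P(C(t);x(t))$ is a cone, the inclusion $\frac{dx}{d\mu}(t)\in -N^P(C(t);x(t))$ transfers first to $\lambda$-a.e.\ and then to $\nu$-a.e.\ for $v=\frac{dx}{d\nu}$. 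On $\{d\nu/d\lambda>0\}$ we divide by the density. On the complementary set, which is $\nu$-null, there is nothing to check.

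With this in hand, (ii)$\Rightarrow$(iii) goes as follows. For $\nu$-a.e.\ $t$ we have $-v(t)\in N^P(C(t);x(t))$, so $\rho$-uniform prox-regularity gives, for every $y\in C(t)$,
\[
\langle v(t),y-x(t)\rangle+\frac{\|v(t)\|}{2\rho}\|y-x(t)\|^2\ge0 .
\]
For any $y\in\mathcal A_\nu$ the point $y(t)$ lies in $C(t)$ $\nu$-a.e. Integrating the inequality, which is measurable by Lemma~\ref{lemma-measurable-fx-mx}, gives $\mathcal E_\nu(x)\ge0$. Proposition~\ref{A-negative} gives $\mathcal E_\nu(x)\le0$, hence $\mathcal E_\nu(x)=0$.

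For (iii)$\Rightarrow$(i), note that $\mathcal E_\nu(x)=0$ says the integral in \eqref{eqn.ineq.1} is nonnegative for every $y\in\mathcal A_\nu$. Since $x$ is admissible and $dx\ll\nu$, this is precisely Definition~\ref{def1} with the measure $\nu$. Combining (i)$\Rightarrow$(ii) from Theorem~\ref{thm_main} with (ii)$\Rightarrow$(iii) closes the cycle.

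The only delicate step is the measure-independence argument in the first paragraph, which handles the case where the measure witnessing (i) or (ii) differs from the $\nu$ in (iii). It needs care with $\nu$-completeness and with the null sets where the Radon–Nikodým densities vanish. Everything else is a direct appeal to Theorem~\ref{thm_main} and Proposition~\ref{A-negative}.
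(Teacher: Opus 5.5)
Your argument is correct, but it handles (i)$\Rightarrow$(iii) differently from the paper.

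The paper reads this direction straight off Definition~\ref{def1}. The integral inequality holds for every $y\in\mathcal A_\nu$, so taking the infimum gives $\mathcal E_\nu(x)\ge 0$, and Proposition~\ref{A-negative} gives equality. The direction (iii)$\Rightarrow$(i) is by definition, and (i)$\Leftrightarrow$(ii) is Theorem~\ref{thm_main}. In doing so the paper tacitly identifies the measure witnessing Definition~\ref{def1} with the fixed $\nu$ of the statement.

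You instead run the cycle (i)$\Rightarrow$(ii)$\Rightarrow$(iii)$\Rightarrow$(i). In (ii)$\Rightarrow$(iii) you move the pointwise inclusion from $\mu$ to $\nu$ through $\lambda=\mu+\nu$. This uses only two facts: $N^P(C(t);x(t))$ is positively homogeneous, and it contains $0$. The second is where $x(t)\in C(t)$ enters, on the set $\{d\mu/d\lambda=0\}$.

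The two routes trade off as follows.
\begin{itemize}
\item The paper's route is shorter and makes (i)$\Leftrightarrow$(iii) elementary, without the hard direction of Theorem~\ref{thm_main}. But it is airtight only when the two measures coincide, or if (iii) is read with the measure existentially quantified.
\item Yours proves the stronger fixed-$\nu$ statement: whether $\mathcal E_\nu$ vanishes does not depend on the choice of reference measure. The cost is invoking (i)$\Rightarrow$(ii) from Theorem~\ref{thm_main}.
\end{itemize}

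The detour through (ii) is the natural one. The value of the integral in \eqref{eq:def_E_nu} is actually independent of $\nu$, since $v\,d\nu=dx$ and $\|v\|\,d\nu=|dx|$. However, the test class $\mathcal A_\nu$ depends on $\nu$ through its null sets. So transferring the integral inequality directly from one measure to another is awkward, whereas the pointwise inclusion transfers cleanly.
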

\begin{proof} The equivalence between \emph{(i)} and \emph{(ii)} follows from Theorem~\ref{thm_main}.
Moreover, Proposition~\ref{A-negative} ensures that $\mathcal A_{\nu}\neq\emptyset$ and that $\mathcal E_\nu(x)\le 0$. Denote $v:=\frac{dx}{d\nu}\in L^1_\nu([0,T];\H)$.\\
\noindent\emph{(i)$\Rightarrow$(iii).} Let $x$ be an integral solution. Then, by Definition~\ref{def1}, for every $y\in\mathcal A_{\nu}$,
\[
\int_0^T \Big[ \langle v(t),y(t)-x(t)\rangle + \tfrac{\|v(t)\|}{2\rho}\|y(t)-x(t)\|^2\Big]\,d\nu(t)\ge 0.
\]
Taking the infimum over $y\in\mathcal A_{\nu}$ yields $\mathcal E_\nu(x)\ge 0$. Combined with
$\mathcal E_\nu(x)\le 0$ from Proposition~\ref{A-negative}, we obtain $\mathcal E_\nu(x)=0$.\\
\noindent\emph{(iii)$\Rightarrow$(i).} Assume that $\mathcal E_\nu(x)=0$. Then, by definition, for every $y\in\mathcal A_{\nu}$,
\begin{equation*}
\begin{aligned}
\int_0^T \Big[ &\langle v(t),y(t)-x(t)\rangle + \tfrac{\|v(t)\|}{2\rho}\|y(t)-x(t)\|^2\Big]\,d\nu(t)\\
&\quad \quad \quad \, \ge \inf_{z\in\mathcal A_{\nu}}\int_0^T \Big[ \langle v(t),z(t)-x(t)\rangle + \tfrac{\|v(t)\|}{2\rho}\|z(t)-x(t)\|^2\Big]\,d\nu(t)
=0.
\end{aligned}
\end{equation*}
Hence $x$ is an integral solution. This completes the proof.
\end{proof}

\section{Stability of Approximations via the Prox-Regular Variational Residual}\label{section6}

As an application of the Br\'ezis-Ekeland-Nayroles-type principle, we establish a residual-based stability result for sweeping processes
under perturbations of the moving set.
We consider a target set-valued map $C\colon[0,T] \rightrightarrows \H$ together with a
sequence of approximations $C_n\colon[0,T] \rightrightarrows \H$ (for instance, arising
from time discretization or geometric regularization).
Our aim is to identify conditions under which any sequence of admissible
trajectories for $C_n$ converges, as $n\to\infty$, to a solution of the limit
sweeping process driven by $C$.

The main tool is the prox-regular variational residual. It assigns to a
trajectory $x(\cdot)$ a scalar quantity quantifying the defect in the
integral inequality that characterizes solutions. In particular, the
residual is \emph{nonnegative} on solutions, and under our standing
assumptions it vanishes exactly at the solutions. In the approximation
setting, one does not expect the residual of a discrete or perturbed
trajectory to be zero. Instead, one requires an \emph{asymptotic
zero-residual} property, namely that the residual of the approximations
tends to zero. This provides a robust way to identify the limit without
passing directly to the normal-cone inclusion.

Given set-valued maps $C_n\colon [0,T] \rightrightarrows  \H$ and a complete positive Radon measure $\nu$ on $[0,T]$, we define the class of \emph{admissible test trajectories} for $C_n$ by
\[
\mathcal{A}_{\nu}^{n}:=\Bigl\{\,y\in \mathcal{C}([0,T];\H): y(t)\in C_n(t)\ \textrm{for }\nu\textrm{-a.e.\ }t\in[0,T]\,\Bigr\},
\]
These test trajectories play the same role as the admissible paths
$\mathcal A_{\nu}$ for the limit map $C$ in Definition~\ref{def1}.

To guarantee that limits of feasible points for $C_n$ remain feasible for
$C$, we assume the following \emph{Outer Semicontinuity Condition}:
\begin{quote}
\textrm{(OSC)} For every $t\in [0,T]$, if $z_n\in C_n(t)$ and $z_n\to z$ in
$\H$, then $z\in C(t)$.
\end{quote}

In addition, to pass to the limit in the residual inequalities, we need
to approximate continuous selections of $C$ by continuous selections of
$C_n$. This is encoded by the \emph{Selection Approximation Property}:
\begin{equation}\label{SAP}\tag{SAP}
\forall\,y\in\mathcal A_{\nu}\ \exists\,(y_n)_{n\in\mathbb N}\ \textrm{ such that }\ y_n\in\mathcal A_{\nu}^n\ \textrm{ and }\ 
\|y_n-y\|_{\infty}\xrightarrow[n\to\infty]{}0,
\end{equation}
where $\|z\|_\infty:=\sup_{t\in[0,T]}\|z(t)\|$.
Condition~\eqref{SAP} ensures that any fixed test trajectory for the
limit problem can be approximated by tests for the perturbed problems,
which is essential to transfer variational inequalities from $C_n$ to
$C$.

Let $x_n(\cdot)$ be of bounded variation and assume that $dx_n\ll\nu$,
with density $v_n:=\frac{dx_n}{d\nu}$. We define the prox-regular
variational residual associated with $C_n$ by
\begin{equation*}
\mathcal{E}_{\nu}^n(x_n)
:=\inf_{y\in \mathcal{A}_{\nu}^{n}}
\int_0^T\Big[
\langle v_n(t),y(t)-x_n(t)\rangle
+\frac{\|v_n(t)\|}{2\rho}\|y(t)-x_n(t)\|^2
\Big]\,d\nu(t).
\end{equation*}
By the Br\'ezis-Ekeland-Nayroles-type principle proved above, under our standing
assumptions the identity $\mathcal E_\nu^n(x_n)=0$ is equivalent to
$x_n(\cdot)$ being a solution of the sweeping process driven by $C_n$
(in the integral sense, equivalently in the sense of differential
measures). Thus, the convergence $\mathcal E_\nu^n(x_n)\to 0$ is an
\emph{asymptotic consistency} requirement for the approximations.

We now state a stability result: a uniform limit of admissible
trajectories with vanishing residual for $C_n$ is a solution of the limit
sweeping process driven by $C$.

\begin{theorem}\label{thm-conv-last}
Assume that $C_n, C\colon [0,T] \rightrightarrows  \H$ satisfy \ref{H1}, \ref{H2}, and \ref{H3} with the same constant $\rho$, and that \eqref{SAP} and (OSC) hold. Fix a complete positive Radon measure $\nu$ on $[0,T]$. 
For each $n$, let $x_n$ be an admissible trajectory for $C_n$ with $x_n(0)=x_0$, $dx_n\ll \nu$, and define $v_n:=\frac{dx_n}{d\nu}$. Assume that $x_n\to x$ uniformly on $[0,T]$ for some mapping $x\colon[0,T]\to\H$, and that
\begin{enumerate}[label=\textnormal{(\alph*)}]
    \item The sequence $(v_n)$ is bounded in $L^p_\nu([0,T];\H)$ for some $p\in ]1,\infty]$ and $|dx_n| \rightharpoonup^{\ast} |dx|$ in $\mathcal{M}([0,T])$, where $dx$ denotes the differential measure of  \(x\).
    \item \(\mathcal{E}_{\nu}^n(x_n)\to 0\).
\end{enumerate}
Then \(x(\cdot)\) is an integral solution. Equivalently, \(x(\cdot)\) is a solution in the sense of differential measures.
\end{theorem}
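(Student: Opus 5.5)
The plan is to verify the definition of integral solution for $x$ directly. We show that $x$ is admissible for $C$, that $dx\ll\nu$ with density $v\in L^1_\nu$, and that the inequality \eqref{eqn.ineq.1} holds for every $y\in\mathcal A_\nu$. The conclusion then follows from Theorem~\ref{thm_main} (or Theorem~\ref{thm_BEN}).

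\textbf{Admissibility and density.} Right-continuity and $x(0)=x_0$ pass to the uniform limit. The constraint $x(t)\in C(t)$ for every $t$ follows from (OSC), since $x_n(t)\in C_n(t)$ and $x_n(t)\to x(t)$.

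For the density, we use that $(v_n)$ is bounded in $L^p_\nu([0,T];\H)$ with $p>1$. Hence a subsequence satisfies $v_n\rightharpoonup v$ weakly in $L^p_\nu$, or weak-$*$ when $p=\infty$; in that case we still get weak convergence in $L^2_\nu$, since $\nu$ is finite. For every $0\le s<t\le T$ and $h\in\H$, testing against $\mathds 1_{]s,t]}h$ gives
\[
\langle x(t)-x(s),h\rangle=\lim_n\langle x_n(t)-x_n(s),h\rangle=\int_{]s,t]}\langle v,h\rangle\,d\nu .
\]
So $x(t)=x(s)+\int_{]s,t]}v\,d\nu$, which shows $x\in BV$, $dx=v\,d\nu\ll\nu$ and $v\in L^p_\nu\subset L^1_\nu$.

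\textbf{Passing to the limit in the inequality.} Fix $y\in\mathcal A_\nu$ and use \eqref{SAP} to pick $y_n\in\mathcal A^n_\nu$ with $y_n\to y$ uniformly. By definition of the residual,
\[
\int_0^T\Big[\langle v_n,y_n-x_n\rangle+\tfrac{\|v_n\|}{2\rho}\|y_n-x_n\|^2\Big]d\nu\ \ge\ \mathcal E^n_\nu(x_n)\ \longrightarrow\ 0 .
\]
Set $w_n:=y_n-x_n$ and $w:=y-x$. Then $w_n\to w$ uniformly, and all these functions are bounded uniformly in $n$.

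The linear term converges. Indeed,
\[
\int\langle v_n,w_n\rangle\,d\nu=\int\langle v_n,w\rangle\,d\nu+\int\langle v_n,w_n-w\rangle\,d\nu .
\]
The second integral is bounded by $\|v_n\|_{L^1_\nu}\|w_n-w\|_\infty\to0$. The first tends to $\int\langle v,w\rangle\,d\nu$ by weak convergence, because $w$ is bounded and $\nu$-measurable, hence in $L^{p'}_\nu$.

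\textbf{Main obstacle: the quadratic term.} The difficulty is the term with $\|v_n\|$, which is only a norm and is not weakly continuous. This is exactly where hypothesis (a), $|dx_n|\rightharpoonup^*|dx|$, enters. Since $|dx_n|=\|v_n\|\,\nu$ and $|dx|=\|v\|\,\nu$, we get, for every continuous scalar $\phi$,
\[
\int\phi\,\|v_n\|\,d\nu\to\int\phi\,\|v\|\,d\nu .
\]
The weight we need is $\phi=\|w\|^2$, which need not be continuous because $x$ is only BV. Two routes are available.

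\emph{First route.} Convergence against continuous $\phi$ combined with the equi-integrability of $\|v_n\|$ (from the $L^p$ bound, $p>1$) upgrades the narrow convergence to weak convergence in $L^1_\nu$ of $\|v_n\|$ to $\|v\|$. For this, note that $\|v_n\|$ is bounded in $L^p_\nu$, so a subsequence converges weakly to some $g\in L^p_\nu$. Testing against continuous functions identifies $g\,\nu=\|v\|\,\nu$, so $g=\|v\|$ $\nu$-a.e. Then, since $\|w\|^2\in L^\infty_\nu\subset L^{p'}_\nu$, the quadratic term
\[
\int\|v_n\|\,\|w_n\|^2\,d\nu
\]
converges to $\int\|v\|\,\|w\|^2\,d\nu$. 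The error term $\int\|v_n\|\big(\|w_n\|^2-\|w\|^2\big)\,d\nu$ is handled by the $L^1$ bound on $v_n$ and uniform convergence.

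\emph{Fallback route.} If only lower semicontinuity were available, it would point in the wrong direction, because the quadratic term enters with a plus sign and we need an upper bound on the limit. This is why hypothesis (a) is essential, and the identification $g=\|v\|$ is the key step.

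\textbf{Conclusion.} Combining the two limits, the inequality \eqref{eqn.ineq.1} holds for every $y\in\mathcal A_\nu$. Hence $x$ is an integral solution, and by Theorem~\ref{thm_main} it is also a solution in the sense of differential measures.
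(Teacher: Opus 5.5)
Your proposal is correct and follows essentially the same route as the paper: (OSC) gives admissibility, weak compactness of $(v_n)$ in $L^p_\nu$ gives a limit density, (SAP) transports the test trajectories, weak--strong convergence handles the linear term, and the hypothesis $|dx_n|\rightharpoonup^{\ast}|dx|$ together with the $L^p$ bound ($p>1$) handles the quadratic term. The differences are only technical and somewhat cleaner: you identify $dx=v\,d\nu$ by testing against indicators of $]s,t]$ rather than invoking Helly's theorem in $\H$, you treat $p=\infty$ through weak convergence in $L^2_\nu$, and you identify the weak limit of $\|v_n\|$ as $\|v\|$ rather than approximating $\|y-x\|^2$ by continuous functions in $L^q_\nu$.
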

\begin{proof} Fix $t\in [0,T]$. Since $x_n(t)\in C_n(t)$ for all $n\in \mathbb{N}$ and $x_n(t)\to x(t)$ by uniform convergence, assumption (OSC) implies $x(t)\in C(t)$. Moreover, $x(0)=\lim_n x_n(0)=x_0$. Since each $x_n$ is right-continuous and $x_n\to x$ uniformly, $x$ is right-continuous as well. \\
Consider $q\in [1,\infty[$ such that $\frac{1}{p}+\frac{1}{q} = 1$. Observe that for all $a,b\in [0,T]$, $\|x_n(b)-x_n(a)\| \leq \int_a^b\|v_n\|\,d\nu$, which implies that 
\begin{equation*}
    \operatorname{var}(x_n;[0,T])\leq \int_0^T\|v_n\|\,d\nu \leq \nu([0,T])^{1/q}\sup_{k\in \N}\|v_k\|_{L^p_\nu}.
\end{equation*}
From Helly's Theorem, up to a subsequence we have $dx_n\rightharpoonup^\ast dx$. \\
Fix $y\in \mathcal{A}_{\nu}$. By \eqref{SAP}, there exists $y_n\in \mathcal{A}_{\nu}^n$ such $\Vert y_n-y\Vert_{\infty}\to 0$. It follows that
\begin{equation}\label{eq:lowerbound}
\int_{0}^T \langle y_n-x_n,v_n\rangle d\nu
+\frac{1}{2\rho}\int_{0}^T \|v_n\|\cdot\|y_n-x_n\|^2\,d\nu
\ \ge\ \mathcal E_\nu^n(x_n).
\end{equation}
Since $(v_n)$ is uniformly bounded in $L^p_\nu([0,T];\H)$, passing to a subsequence, there is $v\in L^p_\nu([0,T];\H)$ such that $v_{n_k}\to v$ weakly in $v\in L^p_\nu([0,T];\H)$. Note that $y_n-x_n\to y-x$ uniformly, therefore $y_n-x_n\to y-x$ strongly in $L^q_\nu([0,T];\H)$, thus 
\begin{equation*}\label{eqn-limit2211}
    \lim_{k\to \infty}\int_0^T\langle y_{n_k}-x_{n_k},v_{n_k}\rangle d\nu = \int_0^T\langle y-x,v\rangle d\nu
\end{equation*} 
On the other hand, note that $f_n := \|y_n-x_n\|^2$ converges uniformly to $f:=\|y-x\|^2$. Since $(v_n)$ is bounded in $L^p_\nu([0,T];\H)$, by Hölder inequality we have
\begin{equation}\label{first-limit-key-last-result}
    \lim_{n\to \infty}\left|\int_0^T f_{n}\|v_{n}\|d\nu-\int_0^T f\|v_{n}\|d\nu\right| = 0
\end{equation}
Since $dx_n\rightharpoonup^{\ast} dx$ we have that for all continuous function $\varphi\colon [0,T]\to \H$
\begin{equation*}
    \int_0^T \langle \varphi,dx\rangle = \lim_{k\to \infty}\int_0^T \langle \varphi,dx_{n_k}\rangle = \lim_{k\to \infty} \int_0^T \langle \varphi,v_{n_k}\rangle d\nu = \int_0^T\langle \varphi,v\rangle d\nu
\end{equation*}
where we have used the weak convergence. The last fact implies that $dx\ll\nu$ and $v = \frac{dx}{d\nu}$. We are going to prove that
\begin{equation}\label{limit-keylastproof}
    \lim_{n\to \infty}\int_0^T f\|v_{n}\|d\nu = \int_0^T f\|v\|d\nu.
\end{equation}
Indeed, consider a sequence of continuous functions $(g_{m})$ such that $g_{m}\to f$ strongly in $L^q_\nu([0,T])$, thus for all $m,n\in \N$
\begin{equation*}
    \begin{aligned}
        \left| \int_0^T f\|v_{n}\|d\nu - \int_0^T f\|v\|d\nu \right|
        \leq & \  \left| \int_0^T f\|v_{n}\|d\nu - \int_0^T g_m\|v_{n}\|d\nu \right| + \left| \int_0^T g_m\|v_{n}\|d\nu - \int_0^T g_m\|v\|d\nu \right|\\
        &+ \ \left| \int_0^T g_m\|v\|d\nu - \int_0^T f\|v\|d\nu \right|\\
        \leq & \ \|f-g_m\|_{L^q_\nu}(\|v\|_{L^p_\nu}+\sup_{k\in \N}\|v_{k}\|_{L^p_\nu}) + \left| \int_0^T g_md|dx_{n}| - \int_0^T g_md|dx| \right| 
    \end{aligned}
\end{equation*}
where we have used Hölder inequality. Taking $n\to \infty$, by using that $|dx_n|\rightharpoonup^{\ast}|dx|$ we conclude that $\forall m\in \N$
\begin{equation*}
    \limsup_{n\to \infty} \left| \int_0^T f\|v_{n}\|d\nu - \int_0^T f\|v\|d\nu \right|\leq (\|v\|_{L^p_\nu} + \sup_{k\in \N}\|v_{k}\|_{L^p_\nu})\|f-g_m\|_{L^q_\nu}.
\end{equation*}
By sending $m\to \infty$, we get \eqref{limit-keylastproof}. From \eqref{first-limit-key-last-result} and \eqref{limit-keylastproof},  it follows that 
\begin{equation}\label{eqn-limit1122}
    \lim_{n\to \infty}\int_0^T f_n\|v_n\|d\nu = \int_0^T f\|v\|d\nu.
\end{equation}
Finally, taking limit in \eqref{eq:lowerbound} under the subsequence $(n_k)$, using \eqref{eqn-limit2211} and \eqref{eqn-limit1122} and assumption (b), we obtain for every $y\in \mathcal{A}_{\nu}$.
\begin{equation*}
\int_{0}^T \langle y-x,\,dx\rangle
+\frac{1}{2\rho}\int_{0}^T \|y-x\|^2\,d|dx|
\ \ge\ 0.
\end{equation*}
Therefore, we get
\begin{equation*}
\int_{0}^T \left[ \langle v(t),y(t)-x(t)\rangle
+\frac{\Vert v(t)\Vert }{2\rho}\|y(t)-x(t)\|^2\right]\,d\nu(t)\geq 0.
\end{equation*}
Therefore $x(\cdot)$ is an integral solution of the sweeping process driven by $C$. Finally, by Theorem~\ref{thm_BEN} (or Theorem~\ref{thm_main}), this is equivalent to $x(\cdot)$ being a solution
in the sense of differential measures.
\end{proof}
\begin{remark}
It is worth noting that, when \(C(t)\) is convex for every
\(t\in[0,T]\), the assumption $|dx_n|\rightharpoonup^\ast |dx|$ is not required in Theorem~\ref{thm-conv-last}.
\end{remark}

\section*{Concluding remarks}

We have investigated sweeping processes in a Hilbert space driven by time-dependent uniformly prox-regular sets, allowing the moving constraint to exhibit discontinuities of bounded variation. Our first contribution is the introduction of a global integral (variational) formulation adapted to the prox-regular setting and tested against \emph{continuous admissible functions}. In contrast with the convex case, the corresponding inequality necessarily contains a quadratic correction term, which compensates for the hypomonotonicity of proximal normal cones and restores a usable variational structure.

Under the standing assumptions \ref{H1}-\ref{H3}, we have shown that the
integral formulation is equivalent to the standard differential-measure
formulation.
A key ingredient is the bounded selection extension property \ref{H3}, which
ensures the availability of sufficiently rich continuous test trajectories
and permits the definition of a meaningful variational residual
$\mathcal E_\nu$.
This residual provides a quantitative measure of the defect in the integral
inequality and plays a central role in stability arguments.

Building on these equivalence results, we proved a
Br\'ezis-Ekeland-Nayroles-type variational principle for sweeping
processes with prox-regular moving sets.
This principle yields a global-in-time variational characterization of
solutions and leads to residual-based stability under perturbations of the
moving set, encompassing approximation procedures such as time
discretization and geometric regularization.

Several directions are suggested by these results.
A natural next step is to develop a discrete
Br\'ezis-Ekeland-Nayroles-type variational principle (in the spirit of
\cite{MR2531193}) and to exploit it in the analysis of inexact catching-up
schemes (in the line of \cite{MR4822735,MR4957011}), where residuals arise
intrinsically from numerical errors and inexact projections.
In this perspective, the residual $\mathcal E_\nu$ provides a principled
\emph{a posteriori} error indicator for approximation procedures and should
lead to quantitative stability and convergence estimates, as well as to
adaptive strategies driven by computable stopping criteria.
More broadly, it would be interesting to refine the residual-based stability
framework obtained here so as to capture finer modes of perturbation of the
moving set and to quantify the stability of solutions with respect to such
perturbations.

\section*{Acknowledgments}

We sincerely thank the reviewer for their insightful and constructive comments, which have significantly improved both the scope and the depth of the paper.

Juan Guillermo Garrido was supported by ANID Chile under grants CMM BASAL funds for Center of Excellence FB210005, Project ECOS230027, and ANID BECAS/DOCTORADO NACIONAL 21230802. Emilio Vilches was supported by ANID (Chile) through Fondecyt Regular grants No.~1240120, and No.~1261728, CMM BASAL funds for the Center of Excellence FB210005, and Project ECOS230027.

\nocite{*}
\bibliographystyle{abbrv}
\bibliography{references}

\end{document}